\theoremstyle{plain}
\newtheorem{thm}{Theorem}[section]
\newtheorem{lem}{Lemma}[section]
\theoremstyle{definition}
\theoremstyle{remark}
\newtheorem{rmk}{Remark}[section]
\theoremstyle{remark}
\newtheorem{exm}{Example}[section]
\numberwithin{equation}{section}
\newcommand{\R}{\mathbb{R}}
\newcommand{\C}{\mathbb{C}}
\renewcommand{\L}{\mathcal L}
\renewcommand{\P}{\mathcal P}
\renewcommand{\hat}{\widehat}
\newcommand{\F}{\mathcal F}
\newcommand{\IF}{\mathcal F^{-1}}
\newcommand{\J}{\mathcal J}
\newcommand{\M}{\mathcal M}
\newcommand{\D}{\mathcal D}
\newcommand{\ID}{\mathcal D^{-1}}
\newcommand{\B}{\mathcal B}
\newcommand{\IB}{\mathcal B^{-1}}
\newcommand{\V}{\mathcal V}
\newcommand{\IV}{\mathcal V^{-1}}
\newcommand{\W}{\mathcal W}
\newcommand{\IW}{\mathcal W^{-1}}
\newcommand{\G}{\mathcal G}
\newcommand{\Q}{\mathcal Q}
\newcommand{\IQ}{\mathcal Q^{-1}}
\newcommand{\ipax}{\jb{i \pa_x}}
\newcommand{\expm}{e^{-it\jb{i\pa_x}}}
\newcommand{\expp}{e^{it\jb{i\pa_x}}}
\newcommand{\expmxi}{e^{-it\jb{\xi}}}
\newcommand{\exppxi}{e^{it\jb{\xi}}}
\newcommand{\pa}{\partial}
\newcommand{\eps}{\varepsilon}
\newcommand{\jb}[1]{\left\langle #1 \right\rangle}
\newcommand{\conj}[1]{\overline{#1}}
\newcommand{\dal}{\Box}
\DeclareMathOperator{\re}{\rm Re}
\DeclareMathOperator{\im}{\rm Im}
\begin{document}
\title{
A note on a system of cubic nonlinear\\
Klein-Gordon equations in one space dimension
 }

\author{
          Donghyun Kim\thanks{
              Seoul National Science Museum. 
              215, Changgyeonggung-ro, Jongno-gu, Seoul 110-360, Korea
              (E-mail: {\tt u553252d@alumni.osaka-u.ac.jp},
              Tel: {\tt (+82)1026771576})
            }
}

\date{July 26, 2014}   
\maketitle

{\small \noindent{\bf Abstract:}\ 
We study the Cauchy problem for a system of 
cubic nonlinear Klein-Gordon equations in one space dimension. 
Under a suitable structural condition on the nonlinearity, 
we will show that the solution exists globally and decays of the
order $O(t^{-1/2})$ in $L^\infty$ 
as $t$ tends to infinity 
without the condition of a compact support on the Cauchy data
which was assumed in the previous works.\\}

{\small \noindent{\bf Key Words:}\ 
Nonlinear Klein-Gordon equations;
Cubic nonlinearity;
Global solution;
Time-decay.}

{\small \noindent{\bf 2010 Mathematics Subject Classification:}\ 
35L70; 35B40; 35L15.}



\section{Introduction} \label{sec_intro}
In the present paper, we consider the Cauchy problem for 
a system of nonlinear Klein-Gordon equations
in one space dimension:
\begin{align}\label{nlkg}
 (\dal +1) u = F (u),\qquad (t,x)\in\R\times\R
\end{align}
with initial conditions
\begin{align}\label{data}
 u(0,x) = f(x), \quad \pa_t u(0,x) = g(x), \qquad x\in\R,
\end{align}
where $\dal = \pa_t^2 - \pa_x^2$,
$u = (u_j)_{1\le j\le N}$ is an $\R^N$-valued unknown function of
$(t,x)\in\R\times\R$
and
$f = (f_j)_{1\le j\le N}$, $g = (g_j)_{1\le j\le N}$
are $\R^N$-valued.
The nonlinear term $F(u)=(F_j(u))_{1\le j\le N}$ is assumed to be
a cubic homogeneous polynomial of $u$, i.e., we may put
\begin{align}\label{Fjform}
 F_j(u) = \sum_{k,l,m=1}^N C_{j,k,l,m} u_k u_l u_m
\end{align}
with some real constants $C_{j,k,l,m}$.
Also we assume that the data $f$ and $g$ 
are sufficiently small throughout this paper.
The aim of this paper is to introduce a structural condition on the nonlinearity
under which the solution of \eqref{nlkg}--\eqref{data} exists globally and
decays like $O(t^{-1/2})$ in $L^\infty$ as $t \to +\infty$
without the restriction of a compact support on the Cauchy data
which was assumed in the previous works \cite{Su2005-2}, \cite{Kim1}.

From the perturbative viewpoint, 
cubic nonlinear Klein-Gordon systems are of special interest
in one space dimensions because large-time
behavior of the solution is actually affected by the structure of the nonlinearities.
So we have to put some restrictions on the nonlinearities to obtain 
global existence and sharp time-decays for the solutions.

Let us recall some previous results briefly. 
In order to introduce numerous results, 
we let the cubic nonlinear term $F$ in \eqref{nlkg}
also include first-order derivatives as well as $u$,
i.e., we replace $F(u)$ by $F(u, \pa_t u, \pa_x u)$
throughout this section.
First we restrict our attention to the problem \eqref{nlkg}
with sufficiently smooth, compactly-supported data;
$f, g\in C_0^\infty(\R)$.
Then we can find lots of works to the Cauchy problem
\eqref{nlkg}--\eqref{data} which discuss global existence 
and large-time behavior of the solution.
In the scalar case ($N=1$), some classes of the cubic terms were found
by Moriyama \cite{Mori1997} and Katayama \cite{Kata1999} 
for which the Cauchy problem admits the unique global solution with a free profile.
Delort \cite{De2006} extended their results and found a sufficient condition
that the small data global existence holds for \eqref{nlkg}--\eqref{data}
when $N=1$.
A pointwise asymptotic profile of the solution is also provided in \cite{De2006}.
In the most common case; $F=u^3$,
we can conclude from \cite{De2006} that the global small solution to
$(\dal+1)u=u^3$
decays like a free solution, i.e. of the order $O(t^{-1/2})$ in $L^\infty$,
while it does not behave like a free solution.
Sunagawa \cite{Su2005-2} partially extended this result
to the corresponding two-component system 
\begin{align}\label{nlkg-res}
 \left\{
 \begin{array}{l}
 (\dal+1)u_1 = (u_1^2 + u_2^2) u_1,\\
 (\dal+1)u_2 = (u_1^2 + u_2^2) u_2.
 \end{array}
 \right.
\end{align}
It should be noted that viewing $U=u_1 + iu_2$, $i=\sqrt{-1}$,
this system can be rewritten as the complex-valued single equation 
\begin{align}\label{nlkg-comp}
(\dal+1)U = |U|^2 U.
\end{align}
According to \cite{Su2005-2},
the small data global existence holds for \eqref{nlkg-comp}
and the solution decays like $O(t^{-1/2})$ in $L^\infty$,
however, only a kind of a prediction is given there
for the asymptotic profile of the solution
since the approach \cite{De2006} does not work well in the system case
as pointed out in \cite{Su2005-2}.
As far as the author knows, 
it is still the open problem to find the pointwise asymptotics
of the solution for \eqref{nlkg-comp},
even in the case that the Cauchy data are sufficiently small, smooth
and compactly-supported.
In \cite{Su2006}, the nonlinear dissipative term $F=-(\pa_t u)^3$
was investigated in detail and the asymptotic profile of the solution was given
from which the sharp decay rate $O(t^{-1/2} (\log t)^{-1/2})$ in $L^\infty$
follows immediately.
Recently, this result has been extended by \cite{Kim1} to the
complex-valued case, i.e., 
$(\dal +1)U = -|\pa_t U|^2 \pa_t U$.
Moreover, a structural condition on the nonlinearity is given in \cite{Kim1}
under which the solution for \eqref{nlkg} exists globally 
and decays rapidly of the order $O(t^{-1/2} (\log t)^{-1/2})$ in $L^\infty$.
However, the situation is the same as in the case of \eqref{nlkg-comp},
that is, the previous approach \cite{Su2006} does not work well
in the system case if we try to find an asymptotic profile of the solution.
We refer the readers to 
\cite{Fang2006, Kim2, Lind2005-2, Su2005}
and the references cited therein for more related results 
concerning global existence and large-time behavior
of small amplitude solutions to the Cauchy problem
\eqref{nlkg} for sufficiently smooth, compactly-supported data.

Now we turn our attention to the problem \eqref{nlkg}--\eqref{data}
with the condition that the data are not compactly-supported.
In this case, there are not so many papers
which treat the problem
with cubic nonlinearities in one space dimension,
because the methods used in the papers mentioned above
(e.g. change of variables using the hyperbolic coordinates)
are not applicable in general.
In the scalar case, 
Hayashi \cite{Ha2008} proved the global existence of the small solution
for \eqref{nlkg}--\eqref{data} and found
some asymptotic profiles of the solution when $F = u^3$,
if the data are in suitable weighted Sobolev spaces.
However, the pointwise asymptotic behavior of the solution 
is still not found in \cite{Ha2008}.
The main tool of \cite{Ha2008} is a decomposition of the
free Klein-Gordon evolution group, 
which was used also in
\cite{Ha2009} to treat a final state problem for \eqref{nlkg} when $N=1$
(see also \cite{Ha2010, Ha2012, Ha2013} 
for further developments of this method).
In the system case, 
when the linear part of \eqref{nlkg} is replaced by $\dal +m_j^2$,
a structural condition of the nonlinearities $F_j$ and
the masses $m_j$ for small data global existence 
with a free profile was studied by Sunagawa \cite{Su2003},
under the conditions that the data are in suitable Sobolev spaces.
However the results in \cite{Su2003} does not cover the resonant cases,
for example \eqref{nlkg-res}.

It should be also remarked that when $N\ge 2$,
normally we cannot expect even the decay rate of a free solution
for a certain global solution;
for instance, the following system
\begin{align*}
 \left\{
 \begin{array}{l}
 (\dal+1)u_1 = 0,\\
 (\dal+1)u_2 = u_1^3
 \end{array}
 \right.
\end{align*}
admits a global solution, but according to \cite{Su2005}
(see also \cite{Su2004} for more examples including 2-dimensional cases as well), 
the second component $u_2$ decays no faster than
$O(t^{-1/2} \log t)$ in $L^\infty$,
even if the data are sufficiently small, smooth and compactly-supported.

The present paper extends the result in \cite{Ha2008} 
to the complex-valued case \eqref{nlkg-comp}.
However, as in the case of compactly-supported data,
the method in \cite{Ha2008} does not work well
if we try to investigate the asymptotic behavior of the solution when $N\ge 2$.
This paper also can be seen as an extension of the papers
\cite{Kim1}, \cite{Su2005-2}
since the compactness of the Cauchy data has been removed. 
We give a structural condition on the nonlinearities
under which the small data solution exists globally for \eqref{nlkg}--\eqref{data}
and decays like a free solution,
if the data are in suitable weighted Sobolev spaces.
As an application,
we will see that the global complex-valued solution to
\eqref{nlkg-comp} decays like $O(t^{-1/2})$ in $L^\infty$,
if the data are small enough and belong to suitable weighted Sobolev spaces.

\section{Main Results} \label{sec_main}
In order to state the main results, we introduce some notations here.
We denote the usual Lebesgue space by $L^p(\R)$
with the norm
$\left\|\phi\right\|_{L^p} = \left(\int_\R|\phi(x)|^p\,dx\right)^{1/p}$
if $p\in [1,\infty)$ and $\left\|\phi\right\|_{L^\infty} = \sup_{x\in\R}|\phi(x)|$
if $p=\infty$.
The weighted Sobolev space is defined by
\begin{align*}
 H_p^{s,q} (\R) = \left\{\phi = (\phi_1, \ldots, \phi_N)
 \in L^p(\R) : 
 \left\|\phi\right\|_{H_p^{s,q}}
 =
 \sum_{j=1}^N \left\|\phi_j\right\|_{H_p^{s,q}}<\infty\right\}
\end{align*}
with the norm
\begin{align*}
 \left\|\phi_j\right\|_{H_p^{s,q}} 
 = 
 \left\|\jb{x}^q \jb{i\pa_x}^s\phi_j\right\|_{L^p}
\end{align*}
for $s,q \in \R$ and $p\in[1,\infty]$,
where $\jb{\cdot} = \sqrt{1+\cdot\,^2}$.
For simplicity, we write $H^{s,q} = H_2^{s,q}$,
$H_p^s = H_p^{s,0}$
and the usual Sobolev space $H^s = H_2^s$.
We define the Fourier transform $\hat\phi(\xi)$ of a function $\phi(x)$ by
\begin{align*}
 (\F \phi)(\xi) = \hat \phi(\xi) 
 = 
 \frac{1}{\sqrt{2\pi}} \int_\R e^{-ix\xi}\phi(x)\,dx.
\end{align*}
Then the inverse Fourier transform is given by
\begin{align*}
 (\IF\phi)(x)
 = 
 \frac{1}{\sqrt{2\pi}} \int_\R e^{ix\xi}\phi(\xi)\,d\xi.
\end{align*}
We introduce a new function 
$\widetilde F=(\widetilde F_j)_{1\le j\le N} : \C^N \to \C^N$ defined by
\begin{align}\label{tildeF}
 \widetilde{F}_j(Y) 
 = 
 \sum_{k,l,m=1}^N C_{j,k,l,m} \Big(
 \conj{Y_k}Y_l Y_m
 +Y_k \conj{Y_l} Y_m
 +Y_k Y_l \conj{Y_m}\Big)
\end{align}
for $Y=(Y_n)_{1\le n\le N}\in\C^N$ with the constants $C_{j,k,l,m}$
given in \eqref{Fjform}.
Also we denote by $Y\cdot Z$ the standard scalar product in $\C^N$
for $Y, Z\in \C^N$ and write $|Y| = \sqrt{Y\cdot Y}$ as usual.

Now we state the main results.

\begin{thm}\label{thm1}
Let $f\in H^{4,1}$, $g\in H^{3,1}$ and
$\left\|f\right\|_{H^{4,1}} + \left\|g\right\|_{H^{3,1}} = \eps$.
Assume there exists an $N\times N$ positive Hermitian matrix $A$ such that
\begin{align}\label{struc-condi}
 \im \big( AY \cdot \widetilde{F}(Y) \big)
 =
 0
\end{align}
for all $Y\in\C^N$.
Then there exists $\eps_0>0$ such that for all $\eps \in (0, \eps_0)$,
the Cauchy problem \eqref{nlkg}--\eqref{data} admits a unique global solution
\begin{align*}
 u(t)\in C^0([0,\infty); H^{4,1}) \cap C^1 ([0,\infty); H^{3,1})
\end{align*}
satisfying the time-decay estimate
\begin{align}\label{thm1-decay}
 \left\| u(t)\right\|_{H_\infty^1}
 \le
 C\eps (1+t)^{-1/2}
\end{align}
for all $t\ge 0$
with some positive constant $C$ which does not depend on $\eps$.
\end{thm}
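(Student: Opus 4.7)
The plan is to use the profile (or factorization) method of Hayashi--Naumkin, combined with the algebraic condition \eqref{struc-condi}. I would first introduce the first-order complex unknown $w_j = \ipax u_j + i\pa_t u_j$, for which \eqref{nlkg} becomes
\begin{align*}
 (i\pa_t - \ipax) w_j = - F_j(u), \qquad u_j = \re\big(\ipax^{-1} w_j\big),
\end{align*}
and define the profile $\hat\phi_j(t,\xi) = \exppxi\, \hat w_j(t,\xi)$, which satisfies
\begin{align*}
 i\pa_t \hat\phi_j(t,\xi) = - \exppxi \, \widehat{F_j(u)}(t,\xi).
\end{align*}
On this representation I would perform a two-step stationary-phase analysis (once in $x$, once in $\xi$) to obtain a pointwise-in-$\xi$ ODE whose resonant tensor is exactly $\widetilde F$, and then propagate an $L^\infty_\xi$-bound on $\hat\phi$ using \eqref{struc-condi}.

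For the first step, the factorization $\IF \expmxi = \M(t)\D(t)\V(t)\IF + (\text{remainder})$ at the stationary point $\xi_0(x,t) = (x/t)/\sqrt{1-(x/t)^2}$ of the phase $x\xi - t\jb{\xi}$ yields, for $|x|<t$,
\begin{align*}
 w(t,x)
 =
 \frac{e^{-it/\jb{\xi_0}} e^{-i\pi/4}}{\sqrt{t}\,\jb{\xi_0}^{3/2}}\,\hat\phi(t,\xi_0) + r(t,x),
\end{align*}
with $\|r(t)\|_{L^\infty}$ of order $t^{-3/4}$ in terms of weighted norms of $\phi$, and an analogous formula for $\bar w$ with the conjugate phase. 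Substituting $u = \re(\ipax^{-1}w)$ into $F_j(u)$ produces eight trilinear terms in $w,\bar w$; in the subsequent $\exppxi\widehat{F_j(u)}(\xi)$ integral, stationary phase retains only the ``two $w$ plus one $\bar w$'' combinations (whose net internal oscillation cancels $\exppxi$), and these assemble exactly to the resonant tensor $\widetilde F$ of \eqref{tildeF} evaluated at $\hat\phi$; the remaining phase configurations are non-degenerate and, after integration by parts in $\xi$, contribute an $L^\infty_\xi$-remainder of size $t^{-1-\delta}$ for some $\delta>0$, provided the weighted Sobolev norms of $u$ grow at most like a small power of $t$. The outcome is a pointwise ODE
\begin{align*}
 i\pa_t \hat\phi_j(t,\xi) = \frac{c(\xi)}{t}\,\widetilde F_j\!\big(\hat\phi(t,\xi)\big) + R_j(t,\xi),
\end{align*}
with real-valued coefficient $c(\xi)$ and $\|R(t,\cdot)\|_{L^\infty_\xi}$ time-integrable.

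The hypothesis \eqref{struc-condi} is then used as follows: since $A$ is Hermitian and $c(\xi)\in\R$,
\begin{align*}
 \tfrac{1}{2}\pa_t \big(A\hat\phi(t,\xi) \cdot \hat\phi(t,\xi)\big)
 =
 -\frac{c(\xi)}{t}\,\im\!\big(A\hat\phi \cdot \widetilde F(\hat\phi)\big)(t,\xi) - \im\!\big(A\hat\phi\cdot R\big)(t,\xi),
\end{align*}
and the $1/t$-term vanishes identically by \eqref{struc-condi}. Since $A$ is positive Hermitian, integration in $t$ gives the uniform a priori bound $\|\hat\phi(t,\cdot)\|_{L^\infty_\xi}\le C\eps$, and reversing the factorization yields the decay \eqref{thm1-decay} for $u$ and $\ipax u$.

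To close a continuity argument one couples this with a weighted energy estimate $\|u(t)\|_{H^{4,1}}+\|\pa_t u(t)\|_{H^{3,1}}\le C\eps(1+t)^{\gamma}$ for a small $\gamma>0$, obtained via the Klein--Gordon Lorentz-type operator $J = x + it\pa_x\ipax^{-1}$ (conjugate to $i\pa_\xi$ on the profile side), which commutes with $i\pa_t - \ipax$ modulo zeroth-order multipliers; the cubic nonlinearities are absorbed using the $L^\infty$-decay obtained above and closed by Gronwall. The main technical obstacle I foresee is this coherent two-scale matching: the $L^\infty_\xi$-bootstrap on $\hat\phi$ cannot afford any $\jb{x}$-weight, whereas the factorization remainder $r$ is only controlled in weighted norms, so each estimate must consume exactly what the other can provide --- the choice of regularity $H^{4,1}\times H^{3,1}$ in the hypothesis is calibrated precisely to this balance, and the sharp identification of the resonant part of $\exppxi\widehat{F(u)}$ with $\widetilde F/t$ (and with a \emph{real} coefficient) is what makes \eqref{struc-condi} applicable.
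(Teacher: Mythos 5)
Your overall scheme is the same as the paper's (reduction to a first-order equation, Hayashi--Naumkin factorization of $\expm$, a pointwise-in-$\xi$ ODE whose resonant part is $\widetilde F$ with a real coefficient, the $A$-weighted quantity $A\hat\phi\cdot\hat\phi$ killed by \eqref{struc-condi}, coupled to slowly growing weighted energy norms). But there is a genuine gap in your treatment of the non-resonant cubic interactions. After the factorization, the ``$+++$'', ``$+--$'', ``$---$''-type combinations are \emph{not} remainders of size $t^{-1-\delta}$: the second stationary-phase step still has a critical point for every $\omega\in\{-1,\pm3\}$, and these terms come out of size $t^{-1}\|\hat\phi\|_{L^\infty_\xi}^3$, exactly like the resonant one, only multiplied by the time-oscillating factor $e^{-itb_\nu(\xi)}$ with $b_\nu(\xi)=\omega_\nu\jb{\xi\omega_\nu^{-1}}-\jb{\xi}$. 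There is no $\xi$-integral left at that stage to integrate by parts in, and integrating by parts in the frequency variables earlier does not help either, since the space-resonant points (equal group velocities) are precisely where the output concentrates. If you only bound these terms by $Ct^{-1}\eps^3$, the $L^\infty_\xi$ bootstrap gives $\|\hat\phi(t)\|_{L^\infty_\xi}\lesssim\eps+\eps^3\log t$, which destroys the uniform bound and hence the sharp $t^{-1/2}$ decay. What is needed (and what the paper does) is to exploit the \emph{time} oscillation: since $|b_\nu(\xi)|\gtrsim\jb{\xi}^{-1}$ for $\omega_\nu\neq1$, one integrates by parts in $\tau$ in the Duhamel integral for $A\hat\phi\cdot\hat\phi$ (equivalently, performs a normal-form correction of the profile), which requires an a priori bound on $\pa_t\hat\phi$ and a careful choice of $\jb{\xi}$-weights (the paper's $\kappa\ge 3/2$ makes $a_{\nu,\kappa}/b_\nu$ bounded). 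This step is absent from your argument, and without it the ODE analysis does not close.

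A smaller point: in the energy part you propose to commute $J=x+it\pa_x\ipax^{-1}$ directly with the equation and ``absorb'' the cubic terms; $J$ does commute with $i\pa_t-\ipax$, but it is nonlocal, so estimating $J$ applied to $F_j(u)$ is not straightforward. The paper avoids this by working with the genuinely differential operator $\P=t\pa_x+x\pa_t$ (and with multiplication by $x$), recovering $\J v$ afterwards through the algebraic identities relating $\J$, $\P$, $x\L$; you would need some such device, or an argument for why $J(F_j(u))$ can be bounded by the norms in your bootstrap space.
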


Here we introduce a typical example of the nonlinearity
satisfying the condition \eqref{struc-condi}.

\begin{exm}
We consider the following nonlinear Klein-Gordon system
\begin{align}\label{sysexm1}
 \left\{
 \begin{array}{l}
  (\Box+1)u_1
  =
  F_1(u)
  =
  (\varrho_1 u_1^2+\varrho_2 u_2^2)u_1,\\
  (\Box +1)u_2
  =
  F_2(u)
  =
  (\varrho_3 u_1^2+\varrho_4 u_2^2)u_2
 \end{array}
 \right.
\end{align}
with $\varrho_1,\ldots,\varrho_4\in\R$, $\varrho_2\varrho_3>0$ 
and sufficiently small data size
$\|u(0)\|_{H^{4,1}}+\|\pa_t u(0)\|_{H^{3,1}}=\eps$.
For this system, we have
\begin{align*}
 \widetilde{F}(Y) 
 =
 \begin{pmatrix}
  \widetilde{F}_1(Y)\\
  \widetilde{F}_2(Y) 
 \end{pmatrix}
 =
 \begin{pmatrix}
  3\varrho_1 |Y_1|^2 Y_1 
  + 2\varrho_2 |Y_2|^2 Y_1 + \varrho_2 Y_2^2 \conj{Y_1}\\
  3\varrho_4 |Y_2|^2 Y_2 
  + 2\varrho_3 |Y_1|^2 Y_2 + \varrho_3 Y_1^2 \conj{Y_2}
 \end{pmatrix}
\end{align*}
so that
\begin{align*}
 \im \big( AY \cdot \widetilde{F}(Y) \big)
 =
 \im \left(
 \varrho_3 |\varrho_2| Y_1^2 \conj{Y_2}^2 
 +\varrho_2|\varrho_3|Y_2^2 \conj{Y_1}^2\right)
 =
 0
\end{align*}
with $A=\text{diag}\left(|\varrho_3|,|\varrho_2|\right)$.
Therefore we can apply Theorem~\ref{thm1} to see that
there exists a global small solution 
$u(t)\in C^0([0,\infty); H^{4,1}) \cap C^1 ([0,\infty); H^{3,1})$
to \eqref{sysexm1} and it satisfies the time-decay estimate \eqref{thm1-decay}.
\end{exm}

\begin{rmk}
Note that if $\varrho_1, \ldots, \varrho_4 =1$ then 
the system \eqref{sysexm1} is reduced to \eqref{nlkg-comp}
for the complex-valued unknown function $U$
via the relation $U=u_1 + iu_2$.
\end{rmk}

Now we perform a reduction of the problem
along the idea of \cite{Ha2008}
(see also \cite{Ha2009}--\cite{Ha2013}).
We define a new dependent variable and initial data
\begin{align*}
 v = \frac{1}{2} \left(u+i\jb{i\pa_x}^{-1}\pa_t u\right),
 \qquad
 v^\circ = \frac{1}{2} \left(f + i\jb{i\pa_x}^{-1} g\right)
\end{align*}
where $v=(v_j)_{1\le j\le N}$
and $v^\circ = (v_j^\circ)_{1\le j\le N}$.
Since $u$ consists of real-valued functions,
the system \eqref{nlkg}--\eqref{data} can be written as
\begin{align}\label{nlkg-var}
 \left\{
 \begin{array}{l}
 \L v= \G (v), \qquad (t,x)\in \R \times \R,\\
 v(0,x) = v^\circ (x),\qquad x\in\R,
 \end{array}
 \right.
\end{align}
where $\L = \pa_t + i\jb{i\pa_x}$ and the nonlinear term
\begin{align*}
 \G(v) = (\G_j(v))_{1\le j\le N}= 4i\jb{i\pa_x}^{-1} F(\re v)
 =
 \frac{i}{2}\ipax^{-1} F\left(v+\conj{v}\right)
\end{align*}
with the notations $\re v = (\re v_n)_{1\le n\le N}$
and $\conj v = (\conj{v_n})_{1\le n\le N}$.
Then it suffices to prove the following theorem.

\begin{thm}\label{thm2}
Let $v^\circ \in H^{4,1}$ and $\left\|v^\circ\right\|_{H^{4,1}} = \eps$.
Assume there exists an $N\times N$ positive Hermitian matrix $A$ such that
the condition \eqref{struc-condi} holds for all $Y\in\C^N$.
Then there exists $\eps_0>0$ such that for all $\eps \in (0, \eps_0)$,
the initial value problem \eqref{nlkg-var} admits a unique global solution
\begin{align*}
 v(t)\in C^0([0,\infty); H^{4,1})
\end{align*}
satisfying the time-decay estimate
\begin{align}\label{vdecay}
 \left\| v(t)\right\|_{H_\infty^1}
 \le
 C\eps (1+t)^{-1/2}
\end{align}
for all $t\ge 0$
with some positive constant $C$ which does not depend on $\eps$.
\end{thm}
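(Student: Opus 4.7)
The plan is a two-quantity bootstrap in the spirit of \cite{Ha2008,Ha2009}. Define the profile $\alpha(t,\xi) := e^{it\jb\xi}\hat v(t,\xi)$, which would be time-independent if $\G\equiv 0$. Local existence of $v\in C([0,T_0];H^{4,1})$ is standard from \eqref{nlkg-var}, so global extension reduces to establishing, uniformly on any interval $[0,T]$ of existence, the two a priori estimates $\|v(t)\|_{H^{4,1}}\le C\eps(1+t)^{C\eps^2}$ and $\|\alpha(t)\|_{L^\infty_\xi}\le C\eps$ (in a suitable $\xi$-weighted version). Stationary-phase inversion of the profile will then yield the pointwise estimate \eqref{vdecay}.

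For the weighted energy I would introduce the vector field $\J := x + it\pa_x \ipax^{-1} = e^{-it\ipax} x\, e^{it\ipax}$, which commutes with $\L$. Standard $L^2$-energy identities on \eqref{nlkg-var} applied to $\pa_x^k v$ for $0\le k\le 4$ and to $\pa_x^k \J v$ for $0\le k\le 3$, combined with the cubic nature of $F$ and algebra estimates in Sobolev spaces, yield
\begin{align*}
 \frac{d}{dt}\|v(t)\|_{H^{4,1}}^2 \le C\|v(t)\|_{H_\infty^1}^2\|v(t)\|_{H^{4,1}}^2.
\end{align*}
Inserting the bootstrap assumption $\|v(t)\|_{H_\infty^1}\le C\eps(1+t)^{-1/2}$ and applying Gronwall closes this half of the argument with the prescribed slow growth.

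For the profile bound I would use a stationary-phase factorization
\begin{align*}
 e^{-it\ipax}\IF \varphi = t^{-1/2}\M\D\V\varphi + (\text{remainder}),
\end{align*}
where on the light cone $|x|<t$ the multiplier $\M$ is essentially $e^{-i\sqrt{t^2-x^2}-i\pi/4}$, $\D$ is the dilation mapping $x$ to the stationary point $\xi_c = x/\sqrt{t^2-x^2}$, and $\V$ contributes the Jacobian factor $\jb\xi^{3/2}$ (these operators will have to be specified in detail). Substituting this representation into $\G(v) = \tfrac{i}{2}\ipax^{-1}F(v+\conj v)$ and separating the cubic monomials in $\alpha,\conj\alpha$ according to whether their total phase in $t$ vanishes (resonant) or oscillates (non-resonant), the equation $i\pa_t\alpha = e^{it\jb\xi}\F[\G(v)]$ reduces to
\begin{align*}
 i\pa_t\alpha(t,\xi) = \frac{c_0}{t\jb\xi^4}\widetilde F\big(\alpha(t,\xi)\big) + R(t,\xi),
\end{align*}
with $\widetilde F$ exactly the nonlinearity in \eqref{tildeF} (the combinatorial factor of three reflects the three ways to place the single $\conj v$ in the expansion of $(v+\conj v)^3$), and with a remainder that should satisfy $\|R(t)\|_{L^\infty_\xi} \le C\eps^3 (1+t)^{-1-\kappa}$ for some $\kappa>0$. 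The structural hypothesis \eqref{struc-condi} now enters decisively: pairing this ODE pointwise in $\xi$ with $A\alpha$ in $\C^N$,
\begin{align*}
 \frac{d}{dt}\big(A\alpha\cdot\alpha\big)(t,\xi)
 = -\frac{2c_0}{t\jb\xi^4}\im\big(A\alpha\cdot\widetilde F(\alpha)\big) + 2\im\big(A\alpha\cdot R\big)
 = 2\im\big(A\alpha\cdot R\big),
\end{align*}
since by \eqref{struc-condi} the resonant contribution vanishes pointwise in $\xi$. Integrating in $t$, using positivity of $A$ and the bound on $R$, gives $\|\alpha(t)\|_{L^\infty_\xi}\le C\eps$, which closes the bootstrap; the factorization then produces \eqref{vdecay}.

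The main obstacle will be the sharpness of the remainder $R$ in $L^\infty_\xi$: a crude factorization error combined with a Sobolev-type bound on the cubic only delivers $\|R(t)\|_{L^\infty_\xi} = O(t^{-1})$, which is borderline non-integrable. Recovering the extra $t^{-\kappa}$ requires integrating by parts in $t$ against the non-stationary phases of the non-resonant monomials---at the cost of invoking $\pa_t v$ via the equation, and hence the slow-growing $H^{4,1}$ energy---together with careful commutator bounds for $\ipax^{-1}$ against the factorization operators $\M,\D,\V$. Balancing the small loss $C\eps^2$ in the energy against the convergence of $\int (1+t)^{-1-\kappa+C\eps^2}\,dt$ is what fixes the choice of regularity $s=4$ and weight $q=1$ in the data space $H^{4,1}$, and constitutes the delicate technical core of the proof.
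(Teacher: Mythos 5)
Your overall strategy is the same as the paper's: factorize the free Klein--Gordon group, derive an ODE for the profile whose resonant part is exactly $\widetilde F$, annihilate it pointwise in $\xi$ by pairing with the positive Hermitian matrix $A$ through \eqref{struc-condi}, remove the non-resonant cubic monomials by integrating their non-stationary phases by parts in time, and close a bootstrap yielding \eqref{vdecay}. The genuine gap is in your energy half. The claimed inequality $\frac{d}{dt}\|v(t)\|_{H^{4,1}}^2\le C\|v(t)\|_{H_\infty^1}^2\|v(t)\|_{H^{4,1}}^2$ cannot hold: for the free flow ($\G\equiv0$) it would give $\|v(t)\|_{H^{4,1}}\le\|v^\circ\|_{H^{4,1}}$, whereas $\|x\,\expm v^\circ\|_{L^2}$ grows linearly in $t$; the source is the linear commutator term in $\L(xv_j)=x\G_j(v)-i\ipax^{-1}\pa_x v_j$, and accordingly the paper only proves $\|xv(t)\|_{H^2}\le C\eps\jb{t}^{1+\gamma}$. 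The quantity that must grow only like $\jb{t}^{C\gamma}$ is $\|v\|_{H^4}+\|\J v\|_{H^3}$, and exactly there your appeal to ``standard energy identities applied to $\pa_x^k\J v$ plus algebra estimates'' does not go through: $\J=\ipax x+it\pa_x$ is nonlocal and not a derivation, and writing $\J\G_j(v)=\tfrac{i}{2}xF_j(v+\conj v)-\tfrac{i}{2}\ipax^{-2}\pa_xF_j(v+\conj v)-\tfrac{t}{2}\ipax^{-1}\pa_xF_j(v+\conj v)$ and estimating term by term gives contributions (both $\|xF_j\|_{H^3}$ and $t\|F_j\|_{H^3}$) of size $O(\jb{t}^{\gamma})$ with no time decay, hence only $\|\J v\|_{H^3}\lesssim\eps\jb{t}^{1+\gamma}$ after integration. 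That growth destroys precisely the later steps you rely on: the remainder bound of the form $\|R(t)\|_{L^\infty}\le Ct^{-5/4}\|\jb{\xi}\F\expp v\|_{H^{1,3}}^3$ and the $L^\infty_\xi$ bound on $\pa_t\psi$ needed in your own integration by parts against the non-resonant phases. The missing idea, which the paper supplies, is to work with the Lorentz boost $\P=t\pa_x+x\pa_t$, which does obey Leibniz so that $\|\P F_j(v+\conj v)\|\le C\|v\|_{H_\infty^1}^2\|\P v\|$, to eliminate $\pa_t v$ through the equation, and only then to recover $\J v$ from $\J=i\P-ix\L-\ipax^{-1}\pa_x$, where $x\L v=x\G(v)$ is cubic with a weight and hence harmless. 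Without this (or an equivalent exhibited cancellation between the $xF_j$ and $t\ipax^{-1}\pa_xF_j$ pieces) the step fails.

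Two further remarks. The resonant coefficient for the unweighted profile $\alpha=e^{it\jb{\xi}}\hat v$ is of size $t^{-1}\jb{\xi}^{2}$, not $t^{-1}\jb{\xi}^{-4}$ (you took the dispersive amplitude to be $\jb{\xi}^{-3/2}$ rather than $\jb{\xi}^{3/2}$); this is harmless for killing the resonance, since the prefactor is real either way, but it is the reason the paper works with the weighted profile $\psi=\jb{\xi}^{\kappa}\F\expp v$ with $\kappa\ge 3/2$, which makes the symbol $a_{\nu,\kappa}/b_\nu$ produced by dividing out the non-resonant phases bounded in $\xi$. Finally, the part you explicitly defer as the ``delicate technical core''---the $O(t^{-5/4})$ remainder estimates for the factorization and the uniform bound on $\int_1^t\re\left(S(\tau)\cdot A\psi(\tau)\right)d\tau$---is the bulk of the paper's argument (Lemmas~\ref{lem1}, \ref{lem3}, \ref{lem4} and the $\Psi_1,\Psi_2$ computation), so your proposal correctly identifies that core but does not supply it.
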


We note that the solution $u$ for \eqref{nlkg}--\eqref{data} is represented by
$u=2\re v$, so Theorem~\ref{thm1} follows immediately
from Theorem~\ref{thm2}.

The rest of this paper is organized as follows. 
In Section~\ref{sec_pre}, we introduce some new operators
to handle the problem \eqref{nlkg-var}
and using them we perform a decomposition
of the free Klein-Gordon evolution group.
Section~\ref{sec_lem} is devoted to obtain
some lemmas involved in the proof of the main result. 
After that, we prove Theorem~\ref{thm2} in Section~\ref{sec_proof}
from which Theorem~\ref{thm1} follows immediately.
In what follows, all non-negative constants will be denoted by $C$
which may vary from line to line unless otherwise specified.

\section{Preliminaries} \label{sec_pre}
In this section, we carry out the decomposition of 
the free Klein-Gordon evolution group
$\expm = \IF e^{-it\jb{\xi}}\F$ into the main and remainder parts
along the lines of \cite{Ha2008}, \cite{Ha2010} and \cite{Ha2012}.
First we denote the dilation operator $\D_\omega$ by
\begin{align*}
 \D_\omega \phi = |\omega|^{-1/2}\phi(x\omega^{-1})
\end{align*}
so that $(\D_\omega)^{-1} = \D_{\omega^{-1}}$.
And we define the multiplication factor
\begin{align*}
 \M(t) = e^{-it\jb{ix}\theta(x)}
\end{align*}
where $\theta(x) =1$ if $|x|<1$ and $\theta(x) = 0$ if $|x|\ge 1$.
We also define the operator $\B$ by
\begin{align*}
 \B\phi = e^{-\pi i/4}\jb{ix}^{-3/2}\theta(x) \phi\left(x\jb{ix}^{-1}\right)
\end{align*}
and then the inverse operator $\IB$ acts on the function $\phi(x)$
defined on $(-1,1)$ as
\begin{align*}
 \IB\phi = e^{\pi i/4} \jb{\xi}^{-3/2} \phi\left(\xi \jb{\xi}^{-1}\right)
\end{align*}
for all $\xi \in \R$. 
Now we introduce the operators $\V$ and $\W$ as follows:
\begin{align*}
 &\V(t) = \IB \conj \M(t)\ID_t \IF e^{-it\jb{\xi}},\\
 &\W(t)= (1-\theta)\ID_t \IF e^{-it\jb{\xi}}.
\end{align*}
Noting that
\begin{align*}
 &\D_t \M(t)\B\V(t)=\D_t \M(t) \B\IB\conj\M(t)\ID_t\IF e^{-it\jb{\xi}}
 =\theta(x/t)\IF e^{-it\jb{\xi}},\\
 &\D_t \W(t)=\D_t(1-\theta)\ID_t\IF e^{-it\jb{\xi}}
 =(1-\theta(x/t))\IF e^{-it\jb{\xi}},
\end{align*}
we obtain the representation for the free Klein-Gordon evolution group:
\begin{align}\label{decompKG}
 e^{-it\jb{i\pa_x}} \IF
 &:=
 \IF e^{-it\jb{\xi}}
 =
 \D_t \M(t) \B\V(t) + \D_t \W(t)\nonumber\\
 &=
 \D_t \M(t)\B + \D_t \M(t) \B(\V(t)-1)+\D_t \W(t).
\end{align}
As we shall see in Section~\ref{sec_proof},
the second and the third term of \eqref{decompKG}
can be regarded as remainder parts,
while the first term of \eqref{decompKG} plays a role as a main term.
Now we let
\begin{align*}
 &\IV(t) = e^{it\jb{\xi}}\F\D_t\M(t)\B,\\
 &\IW(t)= e^{it\jb{\xi}}\F\D_t(1-\theta)
\end{align*}
so that we get
\begin{align*}
 &\IV(t)\IB\conj\M(t)\ID_t
 =e^{it\jb{\xi}}\F\D_t\M(t)\B\IB\conj\M(t)\ID_t
 =e^{it\jb{\xi}}\F\theta(x/t),\\
 &\IW(t)\ID_t
 =
 e^{it\jb{\xi}}\F\D_t(1-\theta)\ID_t
 =e^{it\jb{\xi}}\F(1-\theta(x/t)).
\end{align*}
Then similarly we have the following representation:
\begin{align*}
 \F e^{it\jb{i\pa_x}}
 &:=e^{it\jb{\xi}}\F
 =
 \IV(t)\IB\conj\M(t)\ID_t + \IW(t)\ID_t\\
 &=\IB\conj\M(t)\ID_t + (\IV(t)-1)\IB\conj\M(t)\ID_t + \IW(t)\ID_t.
\end{align*}
Now we define the operator $\J$ by
\begin{align*}
 \J = \ipax\expm x \expp = \IF \jb{\xi}\expmxi i\pa_\xi \exppxi \F
 =\ipax x + it\pa_x.
\end{align*}
The operator $\J$ was frequently used in the previous works
\cite{Ha2008,Ha2010,Ha2012,Ha2013}
to deal with the nonlinear Klein-Gordon equations.
Since $\J$ is not a purely differential operator, it is not that easy
to calculate the action of $\J$ on the nonlinearity in \eqref{nlkg-var}.
So instead we employ the operator
\begin{align*}
 \P = t\pa_x + x\pa_t
\end{align*}
which is closely related to $\J$ via the identities
\begin{align*}
 \J = i\P - ix\L - \ipax^{-1}\pa_x,
 \qquad
 \P = \L x - i \J.
\end{align*}
We close this section by introducing some commutation relations
which can be easily shown through direct calculations:
\begin{align*}
 &[x, \ipax^\beta]  = \beta \ipax^{\beta-2}\pa_x,\qquad
 & &[\L, \J] =0,\\ 
 &[\L, \P] =-i\ipax^{-1}\pa_x \L,\qquad
 & &[\P, \ipax^\beta] = \beta \ipax^{\beta-2}\pa_x\pa_t
\end{align*}
valid for any real number $\beta$.

\section{Lemmas} \label{sec_lem}
In this section, we introduce several lemmas
which will be used in the proof of the main result.
In what follows, we will derive large time asymptotics for the free
Klein-Gordon evolution group.
\begin{lem}\label{lem1}
The estimates
\begin{align*}
 &\left\|\V(t)\phi\right\|_{H^{1,1-\eta}} \le C\left\|\phi\right\|_{H^{1,4}},\\
 &\left\|\jb{\xi}^{3/2}(\V(t)-1)\phi\right\|_{L^\infty} 
 \le Ct^{-1/4}\left\|\phi\right\|_{H^{1,3}},\\
 &\left\|\W(t)\phi\right\|_{L^r} \le Ct^{-1/2}\left\|\phi\right\|_{H^{1,3}}
\end{align*}
and
\begin{align*}
 \left\|(\IV(t)-1)\phi\right\|_{L^\infty}
 \le
 Ct^{-1/4}\left\|\phi\right\|_{H^{1,\frac{3}{4}+\eta}}
\end{align*}
hold for $t\ge 1$, $r\in[2,\infty]$ and $\eta\in(0,1)$
provided that the right hand sides are finite.
\end{lem}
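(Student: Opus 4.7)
The plan is to unfold each of $\W(t)$, $\V(t)$, and $\IV(t)$ into an explicit oscillatory integral and then apply either non-stationary or stationary phase analysis, in the spirit of the decomposition developed in \cite{Ha2008, Ha2010, Ha2012}. The governing phase is $\Phi(x,\xi;t)=x\xi-t\jb{\xi}$, whose only critical point in $\xi$ (for $|x/t|<1$) satisfies $\xi\jb{\xi}^{-1}=x/t$. The operators $\B$ and $\M$ are precisely designed so that $\D_t\M(t)\B$ absorbs the principal stationary-phase contribution, while $\V(t)-1$ and $\W(t)$ carry only sub-principal terms.

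For the bound on $\W(t)\phi$, I would write
\begin{align*}
 \W(t)\phi(x)
 =
 (1-\theta(x))\,\frac{t^{1/2}}{\sqrt{2\pi}}\int_{\R} e^{it(x\xi-\jb{\xi})}\phi(\xi)\,d\xi
\end{align*}
and observe that on $\supp(1-\theta)=\{|x|\ge 1\}$ the derivative $\pa_\xi(x\xi-\jb{\xi})=x-\xi\jb{\xi}^{-1}$ never vanishes and in fact satisfies $|x-\xi\jb{\xi}^{-1}|\ge c\jb{\xi}^{-2}$ uniformly in such $x$. Integrating by parts twice in $\xi$ against this non-stationary factor produces the gain $t^{-2}$ at the cost of up to three weights $\jb{\xi}^{2}$ and two derivatives $\pa_\xi$ of $\phi$; combined with the overall $t^{1/2}$, one obtains the $L^\infty$ bound by $Ct^{-1/2}\|\phi\|_{H^{1,3}}$. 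The $L^2$ bound is automatic from Plancherel and the $L^2$-isometry $\ID_t$, and interpolation covers every $r\in[2,\infty]$.

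For $\V(t)$ and $\V(t)-1$ I would unravel the definition as
\begin{align*}
 \V(t)\phi(\sigma)
 =
 e^{\pi i/4}\jb{\sigma}^{-3/2}\,e^{it/\jb{\sigma}}\,\frac{t^{1/2}}{\sqrt{2\pi}}
 \int_\R e^{it(\sigma\jb{\sigma}^{-1}\xi-\jb{\xi})}\phi(\xi)\,d\xi.
\end{align*}
The total phase $t\bigl(\sigma\jb{\sigma}^{-1}\xi-\jb{\xi}+\jb{\sigma}^{-1}\bigr)$ has a unique nondegenerate critical point at $\xi=\sigma$, where it vanishes and has Hessian $-\jb{\sigma}^{-3}$. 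The stationary-phase principal term reproduces $\phi(\sigma)$ exactly, so $(\V(t)-1)\phi$ is precisely the stationary-phase remainder. A direct $L^2$ kernel estimate, using that $\sigma\mapsto\sigma\jb{\sigma}^{-1}$ is a diffeomorphism onto $(-1,1)$ with smooth Jacobian, yields the $H^{1,1-\eta}$ bound, with the loss $\eta$ absorbing the weight lost at the rim $|\sigma|\to\infty$. Expanding the phase to second order and applying the stationary-phase lemma with remainder gives $(\V(t)-1)\phi(\sigma)=O\bigl(t^{-1/2}\jb{\sigma}^{3/2}\bigr)$ with constants controlled by $\|\phi\|_{H^{1,3}}$, and interpolating this $L^2$-type remainder against the trivial pointwise bound produces the stated $t^{-1/4}$ rate for $\|\jb{\xi}^{3/2}(\V(t)-1)\phi\|_{L^\infty}$.

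The $(\IV(t)-1)\phi$ estimate is handled by the symmetric stationary-phase argument on the Fourier side: $\IV(t)-1=e^{it\jb{\xi}}\F\D_t\M(t)\B-1$ is unraveled into an oscillatory kernel with the same nondegenerate critical point, and the regularity $H^{1,3/4+\eta}$ reflects the fact that $\B$ contributes the amplitude $\jb{x}^{-3/2}$ while the remaining half-weight is supplied by the $t^{-1/4}$ gain from the stationary-phase remainder. The main obstacle throughout is making the stationary-phase remainders quantitative in exactly the weighted Sobolev norms claimed and handling the transition region $|x|\approx t$ where the critical point escapes to $\pm\infty$; this is precisely why the cut-off $\theta$ is built into both $\M$ and $\B$. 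Once the kernels of $\W(t)$, $\V(t)$, $\V(t)-1$, and $\IV(t)-1$ are controlled in this way, the four inequalities of the lemma follow by combining the kernel bounds with H\"older's and Young's inequalities together with Plancherel on the $L^2$ pieces.
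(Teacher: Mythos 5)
The paper does not prove this lemma; it is imported from Hayashi--Naumkin (\cite{Ha2008}, \cite{Ha2012}) with the proof explicitly omitted, so there is no in-paper argument to compare against. Your overall framework — unfolding $\V$, $\W$, $\IV$ into oscillatory integrals with phase $x\xi - t\jb{\xi}$, then applying non-stationary phase on $\{|x|\ge 1\}$ for $\W$ and stationary phase around $\xi = \sigma$ for $\V(t)-1$ — is indeed the right spirit and you correctly identify that $\D_t\M(t)\B$ is engineered to reproduce the stationary-phase leading term exactly. But there are three concrete places where the argument as written would not close.

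\textbf{The $L^r$ bound for $\W$.} You claim the $L^2$ case is ``automatic from Plancherel and the $L^2$-isometry $\ID_t$,'' with $r\in(2,\infty)$ following by interpolation. Plancherel, however, gives only $\|\W(t)\phi\|_{L^2}\le\|\phi\|_{L^2}$ with \emph{no} $t$-decay; interpolating that against the $L^\infty$ bound $\lesssim t^{-1/2}\|\phi\|_{H^{1,3}}$ yields $t^{-(1/2)(1-2/r)}$, which degenerates to $t^0$ at $r=2$ and misses the claimed $t^{-1/2}\|\phi\|_{H^{1,3}}$ for every finite $r$. One actually has to extract decay \emph{in $x$} from the non-stationary phase: for $|x|\ge 1$ a single integration by parts gives a pointwise bound of the schematic form $|\W(t)\phi(x)|\lesssim t^{-1/2}\jb{x}^{-\alpha}\|\phi\|_{H^{1,3}}$ with $\alpha>1/2$, and it is this $x$-decay (not Plancherel) that delivers the $L^2$ bound with the right time rate. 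Relatedly, your ``integrate by parts twice'' is inconsistent with the stated norm: two integrations by parts cost two $\xi$-derivatives of $\phi$, but $H^{1,3}$ only pays for one. One IBP suffices (gaining $t^{-1}$ against the $t^{1/2}$ prefactor), and your weight count (``three weights $\jb{\xi}^2$'') overshoots $H^{1,3}$ considerably.

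\textbf{The $(\V(t)-1)$ bound.} You assert the stationary-phase remainder is $O(t^{-1/2}\jb{\sigma}^{3/2})$ pointwise with constants controlled by $\|\phi\|_{H^{1,3}}$, and that $t^{-1/4}$ then comes from ``interpolating against the trivial pointwise bound.'' This is not the right mechanism, and the intermediate claim is false: a genuine $O(t^{-1/2})$ stationary-phase remainder requires two derivatives of the amplitude, while $H^{1,3}$ only supplies one. With $H^1$ regularity the rate $t^{-1/4}$ is what one obtains directly, by splitting the integral into $|\xi-\sigma|\le t^{-1/2}$ (where $|\phi(\xi)-\phi(\sigma)|\lesssim|\xi-\sigma|^{1/2}\|\phi'\|_{L^2}$ by Cauchy--Schwarz) and $|\xi-\sigma|>t^{-1/2}$ (one IBP). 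There is nothing to interpolate; the lower decay rate is forced by the lower regularity, and that is exactly why the hypothesis is $H^{1,3}$ rather than $H^{2,q}$.

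\textbf{The $(\IV(t)-1)$ bound.} The explanation of the weight $3/4+\eta$ is not a proof; ``half-weight supplied by the $t^{-1/4}$ gain'' has no quantitative content. The same near/far splitting at scale $t^{-1/2}$ around the critical point must be carried out on the Fourier side, and the weight $3/4+\eta$ has to be tracked through the change of variables induced by $\B$; without that bookkeeping the stated dependence on $\|\phi\|_{H^{1,3/4+\eta}}$ is not established.

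In short, the decomposition and the identification of the critical point and of the principal stationary-phase term are correct, but the three quantitative steps above would need to be reworked to match the stated norms and decay rates.
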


We omit the proof of this lemma because it is exactly the same as
that of the previous works (\cite{Ha2008}, \cite{Ha2012}, etc.). 
Next, we introduce a time-decay estimate in terms of the operator $\J$
whose proof can be found in \cite{Ha2008} or \cite{Ha2010}
(see also \cite{Ha2012}).

\begin{lem}\label{lem2}
 The estimate
 \begin{align*}
 \left\|\phi\right\|_{L^\infty}
 \le
 C\jb{t}^{-1/2}\|\phi\|_{H^{3/2}}^{1/2}
 \left(\|\phi\|_{H^{3/2}}^{1/2}+\|\J\phi\|_{H^{1/2}}^{1/2}\right)
 \end{align*}
 is valid for all $t\ge0$, provided that the right-hand side is finite.
\end{lem}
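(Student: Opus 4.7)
The plan is to treat the small-time and large-time regimes separately and, in the large-time regime, exploit the decomposition of the free Klein--Gordon evolution group carried out in Section~\ref{sec_pre} together with a one-dimensional Gagliardo--Nirenberg inequality on the Fourier side. For $t \le 1$, the factor $\jb{t}^{-1/2}$ is bounded from below, so the estimate reduces to the Sobolev embedding $H^{1/2+\eta}(\R)\hookrightarrow L^\infty(\R)$, which yields $\|\phi\|_{L^\infty}\le C\|\phi\|_{H^{3/2}}$ and is absorbed into the right-hand side. It therefore suffices to prove the bound for $t\ge 1$.

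For $t\ge 1$, I would set $\psi=\expp\phi$ (so that $\phi=\expm\psi$ and $\|\psi\|_{H^s}=\|\phi\|_{H^s}$ by unitarity) and apply the representation
\begin{align*}
 \phi = \D_t \M(t)\B \hat\psi + \D_t \M(t)\B(\V(t)-1)\hat\psi + \D_t \W(t)\hat\psi
\end{align*}
from \eqref{decompKG}. The main term is estimated by
$\|\D_t\M(t)\B\hat\psi\|_{L^\infty}\le t^{-1/2}\|\B\hat\psi\|_{L^\infty}$,
and the key computation is the change of variables $y=\xi\jb{\xi}^{-1}$ in the definition of $\B$, which produces the pointwise identity
$|(\B\hat\psi)(y)| = \jb{\xi}^{3/2}|\hat\psi(\xi)|$ for $|y|<1$ and gives
$\|\B\hat\psi\|_{L^\infty}=\|\jb{\xi}^{3/2}\hat\psi\|_{L^\infty}$. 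Applying the one-dimensional Gagliardo--Nirenberg inequality $\|f\|_{L^\infty}^2\le 2\|f\|_{L^2}\|\pa_\xi f\|_{L^2}$ to $f=\jb{\xi}^{3/2}\hat\psi$ yields
\begin{align*}
 \|\jb{\xi}^{3/2}\hat\psi\|_{L^\infty}^2
 \le
 C\|\psi\|_{H^{3/2}}\bigl(\|\psi\|_{H^{1/2}}+\|\jb{\xi}^{3/2}\pa_\xi\hat\psi\|_{L^2}\bigr).
\end{align*}

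To convert the last norm into an $\J$-norm, I would use the commutation identity $\expp\J\phi = \ipax\, x\psi$, which is immediate from $\J=\ipax\expm x\expp$ and the fact that $\expp$ commutes with $\ipax$. This gives $\|x\psi\|_{H^{3/2}}=\|\ipax\, x\psi\|_{H^{1/2}}=\|\J\phi\|_{H^{1/2}}$, so $\|\jb{\xi}^{3/2}\pa_\xi\hat\psi\|_{L^2}=\|\J\phi\|_{H^{1/2}}$. Substituting back, together with $\sqrt{a+b}\le\sqrt a+\sqrt b$, produces exactly the claimed bound for the main term. For the two remainder terms, Lemma~\ref{lem1} gives
$\|\D_t\M(t)\B(\V(t)-1)\hat\psi\|_{L^\infty}\le t^{-1/2}\|\jb{\xi}^{3/2}(\V(t)-1)\hat\psi\|_{L^\infty}\le Ct^{-3/4}\|\hat\psi\|_{H^{1,3}_\xi}$ and a similar extra $t^{-1/2}$-gain for $\D_t\W(t)\hat\psi$ via the $L^r$-bound with $r=\infty$. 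Both remainders therefore carry a strictly better time-decay than $t^{-1/2}$, so any higher regularity they involve can be reabsorbed into the right-hand side of Lemma~\ref{lem2} (via interpolation with the finite right-hand side).

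The main obstacle is exactly this last point: the raw application of Lemma~\ref{lem1} to the remainders requires weighted Sobolev norms of $\hat\psi$ stronger than $H^{3/2}$, which do not appear on the right-hand side of Lemma~\ref{lem2}. Overcoming this requires either interpolating the remainder bounds against the main-term bound (trading the extra time-decay for lost regularity) or, following \cite{Ha2008,Ha2010,Ha2012}, refining the decomposition so that only the lower norms $\|\phi\|_{H^{3/2}}$ and $\|\J\phi\|_{H^{1/2}}$ enter in the final constant. Since the proof is essentially the one in those references, I would follow their refinement and cite the result rather than reproduce the somewhat technical interpolation in detail.
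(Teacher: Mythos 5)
Your handling of the main term is correct and mirrors the Hayashi--Naumkin strategy: the pointwise identity $|(\B\hat\psi)(y)|=\jb{\xi}^{3/2}|\hat\psi(\xi)|$ is right, the Gagliardo--Nirenberg step on $f=\jb{\xi}^{3/2}\hat\psi$ is right, and the identification $\|\jb{\xi}^{3/2}\pa_\xi\hat\psi\|_{L^2}=\|\J\phi\|_{H^{1/2}}$ via $\expp\J\phi=\ipax\,x\psi$ is clean and correct. The $t\le 1$ case by Sobolev embedding is also fine. (Note, the paper omits the proof entirely, referring to \cite{Ha2008,Ha2010,Ha2012}, so there is no ``paper's proof'' to compare against literally.)

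The gap you flag at the end is, however, a real and unrepaired one, and the patch you gesture at does not work. The remainder bounds you extract from Lemma~\ref{lem1} yield something of the form $t^{-1/2}\|\phi\|_{L^\infty}\lesssim t^{-1/2}(\text{GN bound})+t^{-3/4}\|\hat\psi\|_{H^{1,3}}$, where $\|\hat\psi\|_{H^{1,3}}\sim\|\phi\|_{H^3}+\|\J\phi\|_{H^2}$ is strictly stronger than anything on the right of Lemma~\ref{lem2}. Interpolation cannot remove this: interpolating the remainder term against the trivial $t$-independent Sobolev bound $\|\phi\|_{L^\infty}\lesssim\|\phi\|_{H^{3/2}}$ produces $t^{-3\theta/4}\|\phi\|_{H^3}^{\theta}\|\phi\|_{H^{3/2}}^{1-\theta}$, which still carries the $H^3$ norm for every $\theta>0$, and for $t\sim 1$ (where the Lemma must also hold) the extra time-decay buys you nothing. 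You would need the remainder bound itself to involve only $\|\phi\|_{H^{3/2}}$ and $\|\J\phi\|_{H^{1/2}}$; the estimates of Lemma~\ref{lem1} as stated do not give that, so this part of the argument is genuinely missing, not merely ``technical.'' A self-contained proof would instead go directly at the oscillatory integral $\phi(x)=(2\pi)^{-1/2}\int e^{i(x\xi-t\jb{\xi})}\hat\psi(\xi)\,d\xi$, e.g.\ by a time-dependent frequency localization and van der Corput on the low-frequency piece (where $\J$ controls $\jb{\xi}^{3/2}\pa_\xi\hat\psi$) and Cauchy--Schwarz on the high-frequency tail (which is controlled by $\|\phi\|_{H^{3/2}}$ alone), rather than routing through the $\V,\W$ decomposition and its $H^{1,3}$-weighted remainder bounds.
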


In the next lemma we obtain large-time asymptotics 
for the nonlinear term $\G$ in the equation \eqref{nlkg-var}.
\begin{lem}\label{lem3}
 Let $\phi = (\phi_j)_{1\le j\le N}\in H^{4,1}$. 
 Then we have the following expression
 \begin{align*}
  &\F\expp\ipax\G_j\left(\expm\phi\right)\\
  &\qquad=
  it^{-1}\exppxi
  \sum_{k,l,m=1}^N
  \sum_{\nu=1}^8
  \mu_\nu C_{j,k,l,m}
  \D_{\omega_\nu} e^{-it\omega_\nu\jb{\xi}}\jb{\xi}^3
  \prod_{\lambda=k,l,m}\left(
  \conj{\hat{\phi_\lambda}}^{1-\alpha_{\lambda_\nu}}
  \hat{\phi_\lambda}^{\alpha_{\lambda_\nu}}\right)
  +R_j(t),
 \end{align*}
 where the real constants $C_{j,k,l,m}$ are given in \eqref{Fjform}
 and the remainder $R_j$ satisfies the estimate
 \begin{align}\label{lem3-remainder}
 \left\|R_j(t)\right\|_{L^\infty}
 \le
 Ct^{-5/4}\left\|\phi\right\|_{H^{4,1}}^3
 \end{align}
 for $t\ge 1$, $j=1,\ldots,N$,
 Here $\mu_\nu$, $\alpha_{\lambda_\nu}$, $\omega_\nu$ are determined by
 \begin{align*}
  &(\alpha_{k_1}, \alpha_{l_1}, \alpha_{m_1})=(1,1,1),
  \qquad\omega_1 =+3,\qquad\mu_1=-i/2,\\
  &(\alpha_{k_2}, \alpha_{l_2}, \alpha_{m_2})=(1,1,0),
  \qquad\omega_2 =+1,\qquad\mu_2=+1/2,\\
  &(\alpha_{k_3}, \alpha_{l_3}, \alpha_{m_3})=(1,0,1),
  \qquad\omega_3 =+1,\qquad\mu_3=+1/2,\\
  &(\alpha_{k_4}, \alpha_{l_4}, \alpha_{m_4})=(1,0,0),
  \qquad\omega_4 =-1,\qquad\mu_4=+i/2,\\
  &(\alpha_{k_5}, \alpha_{l_5}, \alpha_{m_5})=(0,1,1),
  \qquad\omega_5 =+1,\qquad\mu_5=+1/2,\\
  &(\alpha_{k_6}, \alpha_{l_6}, \alpha_{m_6})=(0,1,0),
  \qquad\omega_6 =-1,\qquad\mu_6=+i/2,\\
  &(\alpha_{k_7}, \alpha_{l_7}, \alpha_{m_7})=(0,0,1),
  \qquad\omega_7 =-1,\qquad\mu_7=+i/2,\\
  &(\alpha_{k_8}, \alpha_{l_8}, \alpha_{m_8})=(0,0,0),
  \qquad\omega_8 =-3,\qquad\mu_8=-1/2,
 \end{align*}
 for any $k,l,m\in\{1,\ldots,N\}$.
\end{lem}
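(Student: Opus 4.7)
The plan is to evaluate $\F\expp\ipax\G_j(\expm\phi)$ by substituting the decomposition \eqref{decompKG} of the free evolution into each factor $\expm\phi_\lambda$ inside the cubic nonlinearity, isolating the all-principal piece, and applying a one-dimensional stationary phase. Since
\[
\ipax\G_j(v)=\tfrac{i}{2}F_j(v+\conj v)=\tfrac{i}{2}\sum_{k,l,m}C_{j,k,l,m}\prod_\lambda(v_\lambda+\conj{v_\lambda}),
\]
the triple product expands into $2^3=8$ monomials indexed by $\nu$, each carrying total oscillation exponent $\omega_\nu=2(\alpha_{k_\nu}+\alpha_{l_\nu}+\alpha_{m_\nu})-3$. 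Writing $\expm\phi_\lambda=P_\lambda+E_\lambda^{(1)}+E_\lambda^{(2)}$ via \eqref{decompKG}, with principal part $P_\lambda:=\D_t\M(t)\B\hat\phi_\lambda$ and remainders $E_\lambda^{(1)}:=\D_t\M(t)\B(\V(t)-1)\hat\phi_\lambda$ and $E_\lambda^{(2)}:=\D_t\W(t)\hat\phi_\lambda$, I then expand the triple product trinomially. The single piece in which every factor equals $P_\lambda$ or $\conj{P_\lambda}$ yields the asserted main expression, while the twenty-six other pieces contribute to $R_j(t)$.

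For the main piece, because conjugation flips $\M$ to $\conj\M$ so that the exponents combine into $\omega_\nu$,
\[
\prod_\lambda P_\lambda^{\alpha_{\lambda_\nu}}\conj{P_\lambda}^{1-\alpha_{\lambda_\nu}}
= t^{-1}\D_t\Bigl[\M(t)^{\omega_\nu}\prod_\lambda(\B\hat\phi_\lambda)^{\alpha_{\lambda_\nu}}\conj{(\B\hat\phi_\lambda)}^{1-\alpha_{\lambda_\nu}}\Bigr].
\]
Applying $\F\expp\ipax\ipax^{-1}=\exppxi\F$ together with $\F\D_t g(\xi)=t^{1/2}\hat g(t\xi)$ reduces the main piece to the oscillatory integral
\begin{align*}
\hat h_\nu(t\xi)=(2\pi)^{-1/2}\int_{-1}^{1} e^{-it(y\xi+\omega_\nu\sqrt{1-y^2})}\prod_\lambda(\B\hat\phi_\lambda)^{\alpha_{\lambda_\nu}}\conj{(\B\hat\phi_\lambda)}^{1-\alpha_{\lambda_\nu}}(y)\,dy.
\end{align*}
The phase $\Phi(y)=-y\xi-\omega_\nu\sqrt{1-y^2}$ has a unique non-degenerate critical point at $y_*=(\xi/\omega_\nu)/\jb{\xi/\omega_\nu}$, with critical value $-\omega_\nu\jb{\xi/\omega_\nu}$ and second derivative $\omega_\nu\jb{\xi/\omega_\nu}^3$. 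Using $y_*/\sqrt{1-y_*^2}=\xi/\omega_\nu$ and $\jb{iy_*}=1/\jb{\xi/\omega_\nu}$, the amplitude at $y_*$ equals $e^{-i\pi\omega_\nu/4}\jb{\xi/\omega_\nu}^{9/2}\prod_\lambda\hat\phi_\lambda^{\alpha_{\lambda_\nu}}\conj{\hat\phi_\lambda}^{1-\alpha_{\lambda_\nu}}(\xi/\omega_\nu)$. The standard stationary phase formula, together with the prefactor $\tfrac{i}{2}\exppxi$ and the rewriting via $\D_{\omega_\nu}$, then recovers the claimed main term; the tabulated $\mu_\nu$ emerge once the $e^{\pm i\pi/4}$ factors from $\B$ and from the Gaussian (whose sign depends on $\operatorname{sgn}\omega_\nu$) are collected.

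The remainder $R_j(t)$ splits into (i) the next-order correction to the above stationary phase, which is $O(t^{-2})$ after one integration by parts and hence well below the claimed rate, and (ii) the twenty-six trinomial cross terms containing at least one $E_\lambda^{(1)}$ or $E_\lambda^{(2)}$. For (ii), I estimate $\|R_j\|_{L^\infty_\xi}$ via Hausdorff--Young $\|\F g\|_{L^\infty}\le\|g\|_{L^1}$ applied to each cross term, with the $\jb\xi$ from $\ipax$ transferred to the physical-space product where it acts as one extra derivative absorbed by the $H^{4,1}$-regularity of $\phi$. By Lemma~\ref{lem1}, each $E$-factor carries a strictly faster decay than $\|P_\lambda\|_{L^\infty_x}\sim t^{-1/2}$, gaining at least $t^{-1/4}$ per factor; combining Hölder with the weight absorption allowed by the $H^{4,1}$-norm yields the desired bound $\|R_j\|_{L^\infty_\xi}\le Ct^{-5/4}\|\phi\|_{H^{4,1}}^3$.

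The principal obstacle is the careful bookkeeping of phase factors: the $e^{-i\pi\omega_\nu/4}$ contributed by $\B$, the Gaussian $e^{i\pi\operatorname{sgn}(\omega_\nu)/4}$ from the stationary phase formula, and the $\tfrac{i}{2}$ from $\G_j$ must combine exactly into the tabulated $\mu_\nu\in\{\pm 1/2,\pm i/2\}$. A secondary technicality is the transfer of the $\jb\xi$-weight from $\ipax$ onto the physical-space triple product together with its commutator across $\D_t\M(t)\B$; this is handled by the extra derivative and spatial weight afforded by the $H^{4,1}$-norm of $\phi$.
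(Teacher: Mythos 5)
Your route (expand each factor through \eqref{decompKG}, trinomial expansion into $27$ pieces, stationary phase on the all-principal piece, crude bounds on the rest) is genuinely different from the paper's proof, which performs \emph{no} stationary phase at all: there the factorizations $\F\expp=\IQ(t)\conj\M(t)\ID_t$ and $\expm\IF=\D_t\M(t)\Q(t)$ pull the factor $t^{-1}$ out \emph{exactly} by scaling, the cutoffs $\theta$ and $1-\theta$ make the $\B\V$-part and the $\W$-part of $\Q(t)$ have disjoint supports so that no cross terms ever appear, the identity $\IQ(t)\M^{\omega-1}=\exppxi\D_\omega e^{-it\omega\jb{\xi}}\IQ(\omega t)$ extracts the phase, and only afterwards are the deviations $\V(t)-1$, $\IV(\omega t)-1$, $\W(t)$, $\IW(\omega t)$ measured in weighted $L^\infty$ via Lemma~\ref{lem1}, each supplying the extra $t^{-1/4}$ on top of the already-extracted $t^{-1}$. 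This ordering is not cosmetic: it is what makes the remainder bound \eqref{lem3-remainder} attainable, and it is exactly where your plan breaks. If you expand trinomially in physical space and estimate a cross term such as $P_kP_l E^{(1)}_m$ by Hausdorff--Young, the oscillation that produced the $t^{-1}$ in the main term is discarded; the best you get from Lemma~\ref{lem1} is $\|P\|_{L^\infty}\lesssim t^{-1/2}$, $\|P\|_{L^2}\lesssim 1$, $\|E^{(1)}\|_{L^2}\lesssim t^{-1/4}$, hence $\|\F(P_kP_lE^{(1)}_m)\|_{L^\infty}\lesssim t^{-3/4}$ (and only $t^{-1}$ for terms containing $E^{(2)}$), far short of $t^{-5/4}$. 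To recover the missing power you would have to run stationary phase on each cross term with amplitudes like $(\V(t)-1)\hat\phi_\lambda$, for which no pointwise derivative control is available. Likewise, your claim that the stationary-phase correction to the main piece is $O(t^{-2})$ ``after one integration by parts'' is not justified at the stated regularity: the amplitude $\prod(\B\hat\phi_\lambda)^{\pm}$ has only one derivative in weighted $L^2$ (since $\phi\in H^{4,1}$ gives $\hat\phi\in H^{1,4}$) and carries the endpoint singularity $\jb{iy}^{-3/2}$ at $y=\pm1$, uniformly in $\xi$; this limited regularity is precisely why the lemma claims only $t^{-5/4}$, and it is encoded in the $t^{-1/4}$ losses of Lemma~\ref{lem1}, which your sketch bypasses.

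A secondary point: the phase bookkeeping you defer is not automatic. Carrying out your stationary phase carefully, the Gaussian contributes $e^{i\pi\operatorname{sgn}(\omega_\nu)/4}$ and the $\B$-factors contribute $e^{-i\pi\omega_\nu/4}$, which reproduces the tabulated $\mu_\nu$ for $\omega_\nu>0$ but differs by a factor of $i$ for $\omega_\nu\in\{-1,-3\}$ (equivalently, $\IV(s)$ tends to $-i$ rather than $1$ as $s\to-\infty$, a point the paper itself passes over when it applies Lemma~\ref{lem1} at the negative arguments $\omega_\nu t$). Since those terms are non-resonant and their constants are absorbed into the unspecified $C^\nu_{j,k,l,m}$ of Lemma~\ref{lem4}, this does not affect the main theorem, but you cannot simply assert that ``the tabulated $\mu_\nu$ emerge''; your write-up would either have to redo the table or note that only the $\omega_\nu=1$ coefficients matter downstream.
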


\begin{proof}
First we prove the following representation for $t\ge 1$
\begin{align}\label{lem3-expression}
 &\F\expp\left(\conj{v_k}(t)^{1-\alpha_k}v_k(t)^{\alpha_k}
 \conj{v_l}(t)^{1-\alpha_l}v_l(t)^{\alpha_l}
 \conj{v_m}(t)^{1-\alpha_m}v_m(t)^{\alpha_m}\right)\nonumber\\
 &\qquad=
 -e^{-\frac{\pi}{2} i(\alpha_k+\alpha_l+\alpha_m)}t^{-1}\exppxi
 \D_\omega e^{-it\omega\jb{\xi}}\jb{\xi}^3
 \prod_{\lambda=k,l,m}
 \left(
 \conj{\varphi_\lambda}^{1-\alpha_\lambda}
 \varphi_\lambda^{\alpha_\lambda}
 \right)
 +R_{klm}(t)
\end{align}
where $\varphi_\lambda= \varphi_\lambda(t)= \F\expp v_\lambda(t)$
for each $\lambda=k,l,m$ and
$\alpha_k, \alpha_l, \alpha_m \in \{0,1\}$,
$\omega=2(\alpha_k+\alpha_l+\alpha_m)-3$
with
$\alpha_k+\alpha_l+\alpha_m\neq 3/2$
and the remainder satisfies
\begin{align*}
 \left\|R_{klm}(t)\right\|_{L^\infty}
 \le
 Ct^{-5/4} \left\|\varphi\right\|_{H^{1,4}}^3
\end{align*}
for all $k,l,m\in\{1,\ldots,N\}$.
Here we write $\varphi = (\varphi_1, \ldots, \varphi_N)$.
We introduce a new operator $\Q(t)$ which is defined by
\begin{align*}
 \Q(t) := \B\V(t) + \W(t) = \conj\M(t)\ID_t\IF\expmxi.
\end{align*}
With this operator, we can rewrite the representation for the free
Klein-Gordon evolution group as
\begin{align*}
 &\expm\IF = \IF\expmxi = \D_t\M(t)\Q(t),\\
 &\F\expp = \exppxi\F = \IQ(t)\conj\M(t)\ID_t,
\end{align*}
where
\begin{align*}
 \IQ(t) = \IV(t)\IB + \IW(t) = \exppxi\F\D_t\M(t).
\end{align*}
Since $\D_{\omega t} = \D_\omega \D_t$,
$\ID_\omega = \D_{\omega^{-1}}$
and
$\F\D_{\omega^{-1}} = \D_\omega\F$,
we have
\begin{align}\label{IQM}
 \IQ(t)\M^{\omega-1}
 =\exppxi\F\D_t\M^\omega(t)
 =\exppxi\D_\omega\F\D_{\omega t}\M^\omega(t)
 =\exppxi\D_\omega e^{-it\omega\jb{\xi}}\IQ(\omega t)
\end{align}
when $\omega\neq 0$.
Now we put 
\begin{align*}
 v_\lambda(t)
 =
 \IF\expmxi\varphi_\lambda
 =
 \D_t\M(t)\Q(t)\varphi_\lambda
\end{align*}
for $\lambda=k,l,m$ respectively.
Then taking $\omega = 2(\alpha_k +\alpha_l +\alpha_m)-3$, we get
\begin{align}
 &\exppxi\F
 \prod_{\lambda=k,l,m}\left(
 \conj{v_\lambda}^{1-\alpha_\lambda} v_\lambda^{\alpha_\lambda}\right)
 \nonumber\\
 &\quad=
 \IQ(t)\conj\M(t)\ID_t
 \prod_{\lambda=k,l,m}
 \left(\left(\conj{\D_t\M(t)\Q(t)\varphi_\lambda}\right)^{1-\alpha_\lambda}
 \Big(\D_t\M(t)\Q(t)\varphi_\lambda\Big)^{\alpha_\lambda}
 \right)\nonumber\\
 &\quad=
 \IQ(t)\M^{-1}(t)t^{1/2}\prod_{\lambda=k,l,m}\left(
 t^{-\frac{1-\alpha_\lambda}{2}}t^{-\frac{\alpha_\lambda}{2}}
 \M^{-(1-\alpha_\lambda)}(t)\M^{\alpha_\lambda}(t)
 \left(\conj{\Q(t)\varphi_\lambda}\right)^{1-\alpha_\lambda}
 \Big(\Q(t)\varphi_\lambda\Big)^{\alpha_\lambda}
 \right)\nonumber\\
 &\quad=
 t^{-1}\IQ(t)\M^{2(\alpha_k+\alpha_l+\alpha_m)-4}(t)
 \prod_{\lambda=k,l,m}\left(
 \left(\conj{\Q(t)\varphi_\lambda}\right)^{1-\alpha_\lambda}
 \Big(\Q(t)\varphi_\lambda\Big)^{\alpha_\lambda}
 \right)\nonumber\\
 &\quad=
 t^{-1}\exppxi\D_\omega e^{-it\omega\jb{\xi}}\IQ(\omega t)
 \prod_{\lambda=k,l,m}\left(
 \left(\conj{\Q(t)\varphi_\lambda}\right)^{1-\alpha_\lambda}
 \Big(\Q(t)\varphi_\lambda\Big)^{\alpha_\lambda}
 \right)
 \label{lem3-3}
\end{align}
where we used \eqref{IQM} for the last equality.
Since $\Q(t) = \B\V(t)$ for $|x|\le 1$ 
and $\Q(t)=\W(t)$ for $|x|\ge 1$, we have
\begin{align*}
 &\prod_{\lambda=k,l,m}\left(
 \left(\conj{\Q(t)\varphi_\lambda}\right)^{1-\alpha_\lambda}
 \Big(\Q(t)\varphi_\lambda\Big)^{\alpha_\lambda}
 \right)\\
 &\qquad=
 \prod_{\lambda=k,l,m}\left(
 \left(\conj{\B\V(t)\varphi_\lambda}\right)^{1-\alpha_\lambda}
 \Big(\B\V(t)\varphi_\lambda\Big)^{\alpha_\lambda}
 \right)
 +
 \prod_{\lambda=k,l,m}\left(
 \left(\conj{\W(t)\varphi_\lambda}\right)^{1-\alpha_\lambda}
 \Big(\W(t)\varphi_\lambda\Big)^{\alpha_\lambda}
 \right)
\end{align*}
for all $x\in\R$. In the same manner we obtain
\begin{align}\label{lem3-4}
 &\IQ(\omega t)\prod_{\lambda=k,l,m}\left(
 \left(\conj{\Q(t)\varphi_\lambda}\right)^{1-\alpha_\lambda}
 \Big(\Q(t)\varphi_\lambda\Big)^{\alpha_\lambda}
 \right)\nonumber\\
 &\qquad=
 \IV(\omega t)\IB
 \prod_{\lambda=k,l,m}\left(
 \left(\conj{\B\V(t)\varphi_\lambda}\right)^{1-\alpha_\lambda}
 \Big(\B\V(t)\varphi_\lambda\Big)^{\alpha_\lambda}
 \right)\nonumber\\
 &\qquad\qquad+
 \IW(\omega t)
 \prod_{\lambda=k,l,m}\left(
 \left(\conj{\W(t)\varphi_\lambda}\right)^{1-\alpha_\lambda}
 \Big(\W(t)\varphi_\lambda\Big)^{\alpha_\lambda}
 \right).
\end{align}
Plugging \eqref{lem3-4} into \eqref{lem3-3} and applying 
the following identity
(which follows immediately from the definitions of 
the operators $\B$ and $\IB$)
\begin{align*}
 \IB \prod_{\lambda=k,l,m}\left(
 \big(\conj{\B\chi_\lambda}\big)^{1-\alpha_\lambda}
 \big(\B\chi_\lambda\big)^{\alpha_\lambda}
 \right)
 =
 -e^{-\frac{\pi i}{2} (\alpha_k + \alpha_l + \alpha_m)}
 \jb{\xi}^3
 \prod_{\lambda=k,l,m}\left(
 \big(\conj{\chi_\lambda}(\xi)\big)^{1-\alpha_\lambda}
 \big(\chi_\lambda(\xi)\big)^{\alpha_\lambda}
 \right),
\end{align*}
we obtain
\begin{align}\label{eitxi-46}
 &\exppxi\F
 \prod_{\lambda=k,l,m}
 \left(\conj{v_\lambda}^{1-\alpha_\lambda} 
 v_\lambda^{\alpha_\lambda}\right)
 \nonumber\\
 &\qquad=
 -e^{-\frac{\pi i}{2}(\alpha_k +\alpha_l +\alpha_m)} t^{-1}\exppxi
 \D_\omega e^{-it\omega\jb{\xi}} \IV(\omega t)
 \jb{\xi}^3 \prod_{\lambda=k,l,m}\left(
 \left(\conj{\V(t)\varphi_\lambda}\right)^{1-\alpha_\lambda}
 \Big(\V(t)\varphi_\lambda\Big)^{\alpha_\lambda}
 \right)\nonumber\\
 &\qquad\qquad+
 t^{-1}\exppxi\D_\omega e^{-it\omega\jb{\xi}}
 \IW(\omega t)
 \prod_{\lambda=k,l,m}\left(
 \left(\conj{\W(t)\varphi_\lambda}\right)^{1-\alpha_\lambda}
 \Big(\W(t)\varphi_\lambda\Big)^{\alpha_\lambda}
 \right)\nonumber\\
 &\qquad=
 -e^{-\frac{\pi i}{2}(\alpha_k +\alpha_l +\alpha_m)} t^{-1}
 \exppxi\D_\omega e^{-it\omega\jb{\xi}}
 \jb{\xi}^3
 \prod_{\lambda=k,l,m}\left(
 \conj{\varphi_\lambda}^{1-\alpha_\lambda}
 \varphi_\lambda^{\alpha_\lambda}
 \right) + R_{klm}(t),
\end{align}
where the remainder $R_{klm} = R_{klm}^{\rm I} + R_{klm}^{\rm II}$
is given by
\begin{align*}
 &R_{klm}^{\rm I}
 =
 -e^{-\frac{\pi i}{2}(\alpha_k +\alpha_l +\alpha_m)} t^{-1}\exppxi
 \D_\omega e^{-it\omega\jb{\xi}}\\
 &\qquad\qquad\times\bigg[\IV(\omega t)
 \jb{\xi}^3 \prod_{\lambda=k,l,m}\left(
 \left(\conj{\V(t)\varphi_\lambda}\right)^{1-\alpha_\lambda}
 \Big(\V(t)\varphi_\lambda\Big)^{\alpha_\lambda}
 \right)
 -
 \jb{\xi}^3
 \prod_{\lambda=k,l,m}\left(
 \conj{\varphi_\lambda}^{1-\alpha_\lambda}
 \varphi_\lambda^{\alpha_\lambda}
 \right)\bigg],\\
 &R_{klm}^{\rm II}
 =
 t^{-1}\exppxi\D_\omega e^{-it\omega\jb{\xi}}
 \IW(\omega t)
 \prod_{\lambda=k,l,m}\left(
 \left(\conj{\W(t)\varphi_\lambda}\right)^{1-\alpha_\lambda}
 \Big(\W(t)\varphi_\lambda\Big)^{\alpha_\lambda}
 \right).
\end{align*}
Now we estimate $R_{klm}$.
By Lemma~\ref{lem1} and the standard Sobolev embedding, we evaluate
\begin{align*}
 &\left\|
 (\IV(\omega t)-1)
 \jb{\xi}^3 \prod_{\lambda=k,l,m}\left(
 \left(\conj{\V(t)\varphi_\lambda}\right)^{1-\alpha_\lambda}
 \Big(\V(t)\varphi_\lambda\Big)^{\alpha_\lambda}
 \right)\right\|_{L^\infty}\\
 &\qquad\le
  Ct^{-1/4}\left\|
 \jb{\xi}^{\frac{15}{4}+\eta}
 \prod_{\lambda=k,l,m}\left(
 \left(\conj{\V(t)\varphi_\lambda}\right)^{1-\alpha_\lambda}
 \Big(\V(t)\varphi_\lambda\Big)^{\alpha_\lambda}
 \right)\right\|_{H^1}\\
 &\qquad\le
 Ct^{-1/4}\left\|
 \jb{\xi}^{\frac{11}{8}+\eta} \V(t)\varphi \right\|_{L^\infty}^2
 \left\|\V(t)\varphi\right\|_{H^{1,1-\eta}}\\
 &\qquad\le
 Ct^{-1/4}\left\|
 \jb{\xi}^{3/2} \V(t)\varphi \right\|_{L^\infty}^2
 \left\|\varphi\right\|_{H^{1,4}}\\
 &\qquad\le
 Ct^{-1/4}\left(
 \left\|\jb{\xi}^{3/2}(\V(t)-1)\varphi\right\|_{L^\infty}^2 +
 \left\|\jb{\xi}^{3/2}\varphi\right\|_{L^\infty}^2\right)
 \left\|\varphi\right\|_{H^{1,4}}\\
 &\qquad\le
 Ct^{-1/4} \left(t^{-1/2} \left\|\varphi\right\|_{H^{1,3}}^2
 +\left\|\varphi\right\|_{H^{1,3/2}}^2\right)
 \left\|\varphi\right\|_{H^{1,4}}\\
 &\qquad\le
 Ct^{-1/4} 
 \left\|\varphi\right\|_{H^{1,3}}^2 \left\|\varphi\right\|_{H^{1,4}}
\end{align*}
with $0<\eta<1/8$.
Also in view of the relation
\begin{align*}
 \chi_1 \chi_2 \chi_3 - 
 \widetilde \chi_1\widetilde \chi_2\widetilde \chi_3
 &=
 \frac{1}{3}(\chi_1 - \widetilde\chi_1)\left(
 (\chi_2 - \widetilde\chi_2)
 (\chi_3-\widetilde\chi_3)
 +\chi_2\widetilde\chi_3 + 2\chi_3\widetilde\chi_2\right)\\
 &\qquad+
  \frac{1}{3}(\chi_2 - \widetilde\chi_2)\left(
 (\chi_3-\widetilde\chi_3)(\chi_1 - \widetilde\chi_1)
 +\chi_3\widetilde\chi_1 + 2\chi_1\widetilde\chi_3\right)\\
  &\qquad+
  \frac{1}{3}(\chi_3 - \widetilde\chi_3)\left(
 (\chi_1 - \widetilde\chi_1)(\chi_2-\widetilde\chi_2)
 +\chi_1\widetilde\chi_2 + 2\chi_2\widetilde\chi_1\right)
\end{align*}
and by Lemma~\ref{lem1}, we have
\begin{align*}
 &\left\|
 \jb{\xi}^3\left( \prod_{\lambda=k,l,m}\left(
 \left(\conj{\V(t)\varphi_\lambda}\right)^{1-\alpha_\lambda}
 \Big(\V(t)\varphi_\lambda\Big)^{\alpha_\lambda}
 \right)
 -
 \prod_{\lambda=k,l,m}\left(
 \conj{\varphi_\lambda}^{1-\alpha_\lambda}
 \varphi_\lambda^{\alpha_\lambda}
 \right)\right)
 \right\|_{L^\infty}\\
 &\qquad\le
 C\left(
 \left\|\jb{\xi}^{3/2}(\V(t)-1)\varphi\right\|_{L^\infty}^2
 +\left\|\jb{\xi}^{2+\eta}\varphi\right\|_{L^\infty}
 \left\|\jb{\xi}^{1-\eta}\V\varphi\right\|_{L^\infty}\right)
 \big\|(\V(t)-1)\varphi\big\|_{L^\infty}\\
 &\qquad\le
 C\left(t^{-1/2}\left\|\varphi\right\|_{H^{1,3}}^2
 +\left\|\varphi\right\|_{H^{1,2+\eta}}
 \left\|\V\varphi\right\|_{H^{1,1-\eta}}\right)
 t^{-1/4}\left\|\varphi\right\|_{H^{1,3}}\\
 &\qquad\le
 Ct^{-1/4}
 \left\|\varphi\right\|_{H^{1,3}}^2\left\|\varphi\right\|_{H^{1,4}}
\end{align*}
with $0<\eta<1$.
Since $\F\D_{\omega t}=\F\D_\omega\D_t
=\D_{\omega^{-1}}\D_{t^{-1}} \F$,
from the definition of $\IW$ and the third estimate of Lemma~\ref{lem1}, 
we get
\begin{align*}
 &\left\|
 \IW(\omega t)
 \prod_{\lambda=k,l,m}\left(
 \left(\conj{\W(t)\varphi_\lambda}\right)^{1-\alpha_\lambda}
 \Big(\W(t)\varphi_\lambda\Big)^{\alpha_\lambda}
 \right)\right\|_{L^\infty}\\
 &\qquad\le
 \left\|
 e^{i\omega t\jb{\xi}} \F\D_{\omega t}
 \prod_{\lambda=k,l,m}\left(
 \left(\conj{\W(t)\varphi_\lambda}\right)^{1-\alpha_\lambda}
 \Big(\W(t)\varphi_\lambda\Big)^{\alpha_\lambda}
 \right)
 \right\|_{L^\infty}\\
 &\qquad\le
 Ct^{1/2} \left\|
 \F\prod_{\lambda=k,l,m}\left(
 \left(\conj{\W(t)\varphi_\lambda}\right)^{1-\alpha_\lambda}
 \Big(\W(t)\varphi_\lambda\Big)^{\alpha_\lambda}
 \right)
 \right\|_{L^\infty}\\
 &\qquad\le
 Ct^{1/2} \left\|
 \prod_{\lambda=k,l,m}\left(
 \left(\conj{\W(t)\varphi_\lambda}\right)^{1-\alpha_\lambda}
 \Big(\W(t)\varphi_\lambda\Big)^{\alpha_\lambda}
 \right)
 \right\|_{L^1}\\
 &\qquad\le
 Ct^{1/2}\left\|
 \W(t)\varphi
 \right\|_{L^3}^3
 \le
 Ct^{-1} \left\|\varphi\right\|_{H^{1,3}}^3.
\end{align*}
Combining all together, we find
\begin{align}\label{Rklm47}
 \|R_{klm}(t)\|_{L^\infty}
 &\le
 Ct^{-1}
 \left\|
 (\IV(\omega t)-1)
 \jb{\xi}^3 \prod_{\lambda=k,l,m}\left(
 \left(\conj{\V(t)\varphi_\lambda}\right)^{1-\alpha_\lambda}
 \Big(\V(t)\varphi_\lambda\Big)^{\alpha_\lambda}
 \right)\right\|_{L^\infty}\nonumber\\
 &\quad+
 Ct^{-1}
 \left\|
 \jb{\xi}^3\left( \prod_{\lambda=k,l,m}\left(
 \left(\conj{\V(t)\varphi_\lambda}\right)^{1-\alpha_\lambda}
 \Big(\V(t)\varphi_\lambda\Big)^{\alpha_\lambda}
 \right)
 -
 \prod_{\lambda=k,l,m}\left(
 \conj{\varphi_\lambda}^{1-\alpha_\lambda}
 \varphi_\lambda^{\alpha_\lambda}
 \right)\right)
 \right\|_{L^\infty}\nonumber\\
 &\quad+
 Ct^{-1}
 \left\|
 \IW(\omega t)
 \prod_{\lambda=k,l,m}\left(
 \left(\conj{\W(t)\varphi_\lambda}\right)^{1-\alpha_\lambda}
 \Big(\W(t)\varphi_\lambda\Big)^{\alpha_\lambda}
 \right)\right\|_{L^\infty}\nonumber\\
 &\le
 Ct^{-5/4}\left\|\varphi\right\|_{H^{1,4}}^3.
\end{align}
Therefore by \eqref{eitxi-46} and \eqref{Rklm47},
we arrive at the expression \eqref{lem3-expression}.
Now we take $v_j=\expm\phi_j$ in \eqref{lem3-expression}
so that $\varphi_j = \F\phi_j$.
Finally we obtain from \eqref{lem3-expression} that
\begin{align*}
 &\F\expp\ipax\G_j(v)
 =
 \frac{i}{2}\F\expp F_j(v+\conj{v})\\
 &\qquad=
 \frac{i}{2}\F\expp \sum_{k,l,m=1}^N C_{j,k,l,m}
 (v_k + \conj{v_k})(v_l+\conj{v_l})(v_m+\conj{v_m})\\
 &\qquad=
 it^{-1}\exppxi
 \sum_{k,l,m=1}^N
 \sum_{\nu=1}^8
 \mu_\nu C_{j,k,l,m}
 \D_{\omega_\nu} e^{-it\omega_\nu\jb{\xi}}\jb{\xi}^3
 \prod_{\lambda=k,l,m}\left(
 \conj{\hat{\phi_\lambda}}^{1-\alpha_{\lambda_\nu}}
 \hat{\phi_\lambda}^{\alpha_{\lambda_\nu}}\right)
 +R_j(t)
\end{align*}
with appropriate constants $\mu_\nu$ given above
and the remainder $R_j$ satisfying \eqref{lem3-remainder}.
Lemma~\ref{lem3} is proved.
\end{proof}

Now we will derive a system of ordinary differential equations
for the new variable $\psi(t)$ which is deeply related to our problem.
The following lemma plays an essential role in the proof of Theorem~\ref{thm2}.

\begin{lem}\label{lem4}
Let us define $\psi = \psi(t,\xi) =(\psi_j (t,\xi))_{1\le j\le N}$ by
$\psi_j = \jb{\xi}^\kappa\F\expp v_j$
where $\kappa\ge 1$.
Then $\psi$ satisfies the following system of 
ordinary differential equations with the parameter $\xi\in\R$
\begin{align}\label{odesystem}
 \pa_t\psi (t) = \frac{1}{2}it^{-1}\jb{\xi}^{2-2\kappa}
 \widetilde{F}(\psi(t))+S(t)+R(t)
\end{align}
for $t\ge 1$, where $\widetilde{F}$ is given in \eqref{tildeF}, 
the non-resonant term $S(t) = ((S_j(t))_{1\le j\le N}$ is
\begin{align*}
 S_j(t)
 =
 it^{-1}\jb{\xi}^{\kappa-1}\sum_{k,l,m=1}^N\sum_{\nu=1,4,6,7,8}
 C_{j,k,l,m}^\nu \exppxi \D_{\omega_\nu}
 e^{-it\omega_\nu \jb{\xi}}
 \jb{\xi}^{3-3\kappa}\prod_{\lambda=k,l,m}\left(
 \conj{\psi_\lambda}^{1-\alpha_{\lambda_\nu}}
 \psi_\lambda^{\alpha_{\lambda_\nu}}\right)
\end{align*}
with some constants $C_{j,k,l,m}^\nu\in\C$,
$\alpha_{\lambda_\nu}, \omega_\nu$ given in the previous lemma
and the remainder $R(t)=(R_j(t))_{1\le j\le N}$ satisfying
\begin{align*}
 \left\|R_j(t)\right\|_{L^\infty}
 \le 
 Ct^{-5/4}\left\|\psi\right\|_{H^{1,4-\kappa}}^3.
\end{align*}
\end{lem}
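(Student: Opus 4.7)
The plan is to differentiate $\psi_j$ in time, use Lemma~\ref{lem3} to expand the nonlinear term into eight dilated pieces indexed by $\nu$, and then carry out a resonance analysis that isolates the three non-oscillating contributions. From the equation $\L v_j = \G_j(v)$ and the integrating-factor identity $\pa_t(\jb{\xi}^\kappa \exppxi \hat{v_j}) = \jb{\xi}^\kappa \exppxi \widehat{\L v_j}$ one gets
\begin{align*}
\pa_t\psi_j
= \jb{\xi}^\kappa\F\expp\G_j(v)
= \jb{\xi}^{\kappa-1}\F\expp\ipax\G_j(v),
\end{align*}
since $\ipax$ commutes with $\expp$ and has Fourier symbol $\jb{\xi}$.

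Next I would set $\phi = \expp v$, so that $v = \expm\phi$ and $\hat{\phi_\lambda} = \jb{\xi}^{-\kappa}\psi_\lambda$, putting us in the setting of Lemma~\ref{lem3}. Inserting that expansion and substituting for $\hat{\phi_\lambda}$ turns each triple product $\prod_\lambda \conj{\hat{\phi_\lambda}}^{1-\alpha_{\lambda_\nu}}\hat{\phi_\lambda}^{\alpha_{\lambda_\nu}}$ into $\jb{\xi}^{-3\kappa}\prod_\lambda\conj{\psi_\lambda}^{1-\alpha_{\lambda_\nu}}\psi_\lambda^{\alpha_{\lambda_\nu}}$. Combining the outer $\jb{\xi}^{\kappa-1}$, the $\jb{\xi}^3$ coming from Lemma~\ref{lem3}, and the $\jb{\xi}^{-3\kappa}$ from the profiles gives the total weight $\jb{\xi}^{3-3\kappa}$ inside each $\D_{\omega_\nu}(\cdots)$ and an outer weight $\jb{\xi}^{\kappa-1}$, matching exactly the form of the target expression.

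The crucial step is the resonance analysis. From the table in Lemma~\ref{lem3}, $\omega_\nu = +1$ precisely for $\nu\in\{2,3,5\}$; for these indices $\D_{\omega_\nu}$ is the identity and $\exppxi\cdot e^{-it\jb{\xi}} = 1$, so the oscillations cancel and the contribution becomes time-stationary. Moreover $\mu_2=\mu_3=\mu_5 = 1/2$, and the three associated cubic monomials $\psi_k\psi_l\conj{\psi_m}$, $\psi_k\conj{\psi_l}\psi_m$, $\conj{\psi_k}\psi_l\psi_m$ sum (after summing over $k,l,m$) to exactly $\widetilde{F}_j(\psi)$ by the definition \eqref{tildeF}. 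Collecting the prefactors yields the main term $\tfrac{1}{2}it^{-1}\jb{\xi}^{2-2\kappa}\widetilde{F}_j(\psi)$. The remaining five indices $\nu\in\{1,4,6,7,8\}$ have $\omega_\nu\in\{\pm 3,-1\}$, so the phase $\exppxi\cdot\D_{\omega_\nu}e^{-it\omega_\nu\jb{\xi}}$ is genuinely oscillating, and these contributions define $S_j(t)$ with constants $C_{j,k,l,m}^\nu = \mu_\nu C_{j,k,l,m}$.

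The main obstacle is the bound on the remainder $R_j = \jb{\xi}^{\kappa-1}R_j^{\mathrm{Lem}}$, since the outer weight $\jb{\xi}^{\kappa-1}$ is unbounded in $\xi$ when $\kappa > 1$. However, because $|\omega_\nu|\le 3$ one can commute $\jb{\xi}^{\kappa-1}$ through each dilation $\D_{\omega_\nu}$ at the cost of a multiplicative constant, and then rerun the $L^\infty$ estimates on the two pieces $R_{klm}^{\mathrm{I}}$ and $R_{klm}^{\mathrm{II}}$ from the proof of Lemma~\ref{lem3} with the interior $\jb{\xi}$-weights incremented by $\kappa-1$. Since each profile contributes a factor $\jb{\xi}^{-\kappa}$ through the substitution $\hat{\phi_\lambda} = \jb{\xi}^{-\kappa}\psi_\lambda$, this extra weight is absorbed; Plancherel and the correspondence $\jb{x}\leftrightarrow\jb{i\pa_\xi}$ then give $\|\phi\|_{H^{4,1}}\le C\|\psi\|_{H^{1,4-\kappa}}$, and the claimed bound $\|R_j(t)\|_{L^\infty}\le Ct^{-5/4}\|\psi\|_{H^{1,4-\kappa}}^3$ follows.
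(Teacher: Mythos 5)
Your argument is essentially the paper's own proof: the same integrating-factor reduction to $\pa_t\psi_j=\jb{\xi}^{\kappa-1}\F\expp\ipax\G_j(v)$, the same application of Lemma~\ref{lem3}, the same split into the resonant indices $\nu=2,3,5$ (whose terms, with $\mu_\nu=1/2$, reassemble via \eqref{tildeF} into $\tfrac{1}{2}it^{-1}\jb{\xi}^{2-2\kappa}\widetilde{F}(\psi)$) and the oscillatory indices $\nu\in\{1,4,6,7,8\}$ forming $S$, with the remainder controlled by $\left\|\psi\right\|_{H^{1,4-\kappa}}^3$. The only difference is bookkeeping: the paper first derives the equation for $\psi^1=\jb{\xi}\F\expp v$ (the case $\kappa=1$) and then substitutes $\psi^1=\jb{\xi}^{1-\kappa}\psi$, whereas you substitute $\hat{\phi}_\lambda=\jb{\xi}^{-\kappa}\psi_\lambda$ directly and explicitly flag the outer weight $\jb{\xi}^{\kappa-1}$ on the remainder, a point the paper passes over silently.
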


\begin{proof}
First we let $\psi^1 = \psi |_{\kappa =1}$, that is,
$\psi_j^1 = \jb{\xi}\F\expp v_j$. 
We multiply both sides of \eqref{nlkg-var} by
$\F\ipax\expp = \jb{\xi}\exppxi\F$.
Noting that
$v_j = \expm\IF\jb{\xi}^{-1}\psi_j^1$,
we have
\begin{align}\label{psipde}
 \pa_t \psi_j^1
 &=
 \jb{\xi}\exppxi(i\jb{\xi}+\pa_t)\F v_j
 =
 \jb{\xi} \exppxi \F\L v_j
 =
 \jb{\xi} \exppxi \F\G_j (v)\nonumber\\
 &=
 \F\expp\ipax\G_j(v)
 =
 \F\expp\ipax\G_j(\expm\IF\jb{\xi}^{-1}\psi^1).
\end{align}
Applying Lemma~\ref{lem3} to the right-hand side of \eqref{psipde},
we obtain
\begin{align}
 \pa_t\psi_j^1
 &=
 it^{-1} \sum_{k,l,m=1}^N\sum_{\nu=1}^8
 \mu_\nu C_{j,k,l,m}\exppxi\D_{\omega_\nu}e^{-it\omega_\nu\jb{\xi}}
 \prod_{\lambda=k,l,m}\left(
 (\conj{\psi_\lambda^1})^{1-\alpha_{\lambda_\nu}}
 {(\psi_\lambda^1)}^{\alpha_{\lambda_\nu}}\right)+R_j^1(t)
 \label{psipde2-1}
\end{align}
with $R_j^1$ satisfying
$\left\|R_j^1(t)\right\|_{L^\infty}\le
Ct^{-5/4}\left\|\IF\jb{\xi}^{-1}\psi^1\right\|_{H^{4,1}}^3$.
Since $\psi = \jb{\xi}^{\kappa-1}\psi^1$, replacing
$\psi_j^1$ by $\jb{\xi}^{-\kappa+1}\psi_j$ in \eqref{psipde2-1},
we get
\begin{align}
 \pa_t\psi_j
 &=
 it^{-1}\jb{\xi}^{\kappa-1}\sum_{k,l,m=1}^N\sum_{\nu=1}^8
 \mu_\nu C_{j,k,l,m}\exppxi\D_{\omega_\nu}e^{-it\omega_\nu\jb{\xi}}
 \jb{\xi}^{3-3\kappa}\prod_{\lambda=k,l,m}\left(
 \conj{\psi_\lambda}^{1-\alpha_{\lambda_\nu}}
 {\psi_\lambda}^{\alpha_{\lambda_\nu}}\right)+R_j(t)
 \label{psipde2}
\end{align}
where $R_j$ satisfies
\begin{align*}
 \left\|R_j(t)\right\|_{L^\infty}
 \le
 Ct^{-5/4}\left\|\IF\jb{\xi}^{-\kappa}\psi\right\|_{H^{4,1}}^3
 \le
 Ct^{-5/4}\left\|\psi\right\|_{H^{1,4-\kappa}}^3.
\end{align*}
Now we separate the right-hand side of \eqref{psipde2}
to resonant terms
(i.e. $\omega_\nu =1$) and non-resonant terms
(i.e. $\omega_\nu \neq 1$).
From Lemma~\ref{lem3}, we know that $\omega_\nu =1$
if and only if $\nu = 2,3$ or $5$.
Consequently with $S_j$ given above, we arrive at
\begin{align*}
 \pa_t \psi_j
 &=
 it^{-1}\jb{\xi}^{2-2\kappa} \sum_{k,l,m=1}^N \sum_{\nu=2,3,5}
 \mu_\nu C_{j,k,l,m}
 \prod_{\lambda=k,l,m}\left(
 \conj{\psi_\lambda}^{1-\alpha_{\lambda_\nu}}
 \psi_\lambda^{\alpha_{\lambda_\nu}}\right)
 +S_j (t) + R_j(t)\\
 &=
 \frac{1}{2}it^{-1}\jb{\xi}^{2-2\kappa}
 \sum_{k,l,m=1}^N C_{j,k,l,m}
 \Big(\psi_k \psi_l \conj{\psi_m}
 +\psi_k \conj{\psi_l} \psi_m
 +\conj{\psi_k}\psi_l \psi_m\Big)
 +S_j (t) + R_j(t)
\end{align*}
 for $j=1,\ldots,N$, which proves Lemma~\ref{lem4}.
\end{proof}

\section{Proof of Theorem~\ref{thm2}} \label{sec_proof}
In this section, we prove Theorem~\ref{thm2} along the idea of \cite{Ha2008}.
In \cite{Ha2008}, they chose a suitable phase function to remove
resonant terms appearing in a certain ODE similar to \eqref{odesystem},
and then estimated the variable $\jb{\xi}\F\expp v$ properly.
However, the method there seems not applicable directly to our problem
since we cannot choose such a suitable phase function for \eqref{odesystem}
(note that this difficulty prevent us 
getting some asymptotics for the solution).
So instead, here we use the method in \cite{Su2005-2} with slight variations
to estimate $\jb{\xi}\F\expp v$ (this kind of technique
was also used in \cite{Kim1}, \cite{Kim2}, \cite{Su2006}, etc.).\\

We introduce a function space
\begin{align*}
 X_T
 =
 \left\{
 \phi=(\phi_1,\ldots,\phi_N)\in C^0([0,T];L^2) : \|\phi\|_{X_T}<\infty \right\}
\end{align*}
where
\begin{align*}
 \left\|\phi\right\|_{X_T}
 =
 \sup_{t\in[0,T]}
 \left(\jb{t}^{-\gamma}\left\|\phi(t)\right\|_{H^4}
 +\jb{t}^{-\gamma}\left\|\J\phi(t)\right\|_{H^2}
 +\jb{t}^{-3\gamma}\left\|\J\phi(t)\right\|_{H^3}
 +\jb{t}^{1/2}\left\|\phi(t)\right\|_{H_\infty^1}\right)
\end{align*}
with $0<\gamma \ll 1$ small.\\

The local existence in the function space $X_T$ can be proved by
the standard contraction mapping principle.
Here we assume the following local existence theorem:
\begin{thm}\label{local-existence}
(Local Existence) Let $v^\circ \in H^{4,1}$ and
$\left\|v^\circ\right\|_{H^{4,1}}=\eps$.
Then there exist $\eps_0>0$ and $T>1$ such that for all $\eps\in(0,\eps_0)$,
the initial value problem \eqref{nlkg-var} admits a unique local solution
$v\in C^0([0,T];H^{4,1})$ with the estimate $\left\|v\right\|_{X_T}<\sqrt{\eps}$.
\end{thm}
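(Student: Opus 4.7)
The plan is a Banach fixed-point argument for Duhamel's formula
\begin{align*}
\Phi(v)(t) = e^{-it\jb{i\pa_x}}v^\circ + \int_0^t e^{-i(t-s)\jb{i\pa_x}}\G(v(s))\,ds
\end{align*}
restricted to the closed ball $B_T = \{v\in X_T : \|v\|_{X_T}\le\sqrt{\eps}\}$. For a fixed $T > 1$ and all $\eps \in (0,\eps_0)$ with $\eps_0$ sufficiently small, one shows that $\Phi$ stabilizes $B_T$ and is a strict contraction there. The linear contribution is handled by the unitarity of $e^{-it\jb{i\pa_x}}$ on each $H^s$ together with the identity $\J(t)\,e^{-it\jb{i\pa_x}} = e^{-it\jb{i\pa_x}}\jb{i\pa_x}x$ (a consequence of $[\L,\J]=0$ and $\J|_{t=0}=\jb{i\pa_x}x$), yielding $\|e^{-it\jb{i\pa_x}}v^\circ\|_{H^4}\le\eps$, $\|\J\, e^{-it\jb{i\pa_x}}v^\circ\|_{H^k}\le C\|v^\circ\|_{H^{k+1,1}}\le C\eps$ for $k=2,3$, and via Lemma~\ref{lem2} the $L^\infty$-piece $\jb{t}^{1/2}\|e^{-it\jb{i\pa_x}}v^\circ\|_{H_\infty^1}\le C\eps$.

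For the nonlinear Duhamel integral, the standard $L^2$ energy estimate for $\L=\pa_t+i\jb{i\pa_x}$ gives $\|\Phi(v)(t) - e^{-it\jb{i\pa_x}}v^\circ\|_{H^k}\le\int_0^t\|\G(v(s))\|_{H^k}\,ds$. Since $\G(v)=(i/2)\jb{i\pa_x}^{-1}F(v+\bar v)$ is smoothed by one derivative and $H^1(\R)\hookrightarrow L^\infty(\R)$, Moser trilinear bounds yield $\|\G(v)\|_{H^4}\le C\|v\|_{H_\infty^1}^2\|v\|_{H^4}$; with the $X_T$-definition, the integrand is of order $\jb{s}^{\gamma-1}\|v\|_{X_T}^3$, integrating to $\jb{t}^\gamma\|v\|_{X_T}^3$. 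The commutation $\J(t)\,e^{-i(t-s)\jb{i\pa_x}}=e^{-i(t-s)\jb{i\pa_x}}\J(s)$ (directly from the definition of $\J$) then reduces $\|\J\Phi(v)\|_{H^k}$ ($k=2,3$) to bounding $\|\J(s)\G(v(s))\|_{H^k}$. Using $xv = \jb{i\pa_x}^{-1}\J v - is\jb{i\pa_x}^{-1}\pa_x v$ (from $\J=\jb{i\pa_x}x+is\pa_x$) to redistribute the spatial weight inside the cubic and then applying Moser estimates produces bounds of the form
\begin{align*}
\|\J(s)\G(v(s))\|_{H^k}\le C\|v\|_{H_\infty^1}^2\bigl(\|\J v\|_{H^k}+\|v\|_{H^{k+1}}\bigr) + \text{lower order},
\end{align*}
which integrate to $\jb{t}^\gamma$ for $k=2$ and $\jb{t}^{3\gamma}$ for $k=3$, matching the weights in $\|\cdot\|_{X_T}$. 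The $H_\infty^1$-piece of $\|\Phi(v)\|_{X_T}$ then follows by applying Lemma~\ref{lem2} to $\jb{i\pa_x}\Phi(v)$ and inserting the $H^{5/2}$- and $H^{3/2}$-controls of $\Phi(v)$ and $\J\Phi(v)$ already established.

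Combining these estimates, each component of $\|\Phi(v)\|_{X_T}$ is dominated by $C\eps+C(T)\eps^{3/2}$, so fixing $T>1$ and then taking $\eps_0$ small enough gives $\|\Phi(v)\|_{X_T}\le\sqrt{\eps}$. The contraction estimate $\|\Phi(v)-\Phi(w)\|_{X_T}\le\tfrac{1}{2}\|v-w\|_{X_T}$ is obtained from the same trilinear bounds applied to the difference via the identity $F(v+\bar v)-F(w+\bar w)=\int_0^1 F'(\theta(v+\bar v)+(1-\theta)(w+\bar w))\,d\theta\cdot(v-w+\conj{v-w})$, which completes the fixed-point argument. The main technical obstacle is the estimate of $\J\G(v)$: since $\J$ is not a pure differential operator and does not obey a Leibniz rule directly, the spatial weight inside the smoothed cubic must be rerouted either through $\J=\jb{i\pa_x}x+is\pa_x$ and the identity $xv = \jb{i\pa_x}^{-1}\J v - is\jb{i\pa_x}^{-1}\pa_x v$, or through the more structured identity $\J = i\P - ix\L - \jb{i\pa_x}^{-1}\pa_x$ together with the commutator $[\P,\jb{i\pa_x}^{\beta}]=\beta\jb{i\pa_x}^{\beta-2}\pa_x\pa_t$, and one must verify that the $t$-dependent factors introduced by these manipulations do not exceed the weights $\jb{t}^\gamma$ and $\jb{t}^{3\gamma}$ built into the $X_T$-norm.
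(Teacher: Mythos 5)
Your proposal is correct and matches the paper's intent: the paper does not actually prove this theorem but simply asserts that it "can be proved by the standard contraction mapping principle" in $X_T$, and your Duhamel/fixed-point scheme (unitarity of $e^{-it\jb{i\pa_x}}$, the commutation $\J e^{-it\jb{i\pa_x}}=e^{-it\jb{i\pa_x}}\jb{i\pa_x}x$, Moser-type trilinear bounds, Lemma~\ref{lem2} for the $H_\infty^1$ component, and rerouting the weight via $\J=\jb{i\pa_x}x+it\pa_x$ or $\P$) is exactly that standard argument. The $T$-dependent constants you flag are harmless here because $T$ is fixed before $\eps_0$ is chosen, so the estimate $C\eps+C(T)\eps^{3/2}<\sqrt{\eps}$ closes as you state.
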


Let us prove that the existence time $T$ can be extended to infinity.
We will show this by contradiction. 
We assume there exists a minimal time $T>0$ such that 
$\left\|v\right\|_{X_T}<\sqrt\eps$ does not hold,
that is, we have $\left\|v\right\|_{X_T}\le\sqrt\eps$.
In what follows we 
will prove $\left\|v\right\|_{X_T}\le C\eps$ under the assumption
$\left\|v\right\|_{X_T}\le\sqrt\eps$, which yields the 
desired contradiction.\\

First, we take the operator $\expp$ to the both sides of \eqref{nlkg-var}.
Using the identity $\expp \L = \pa_t \expp$
and integrating both sides with respect to time, we get the following
integral equation
\begin{align*}
 v_j(t)
 =
 \expm v_j^\circ + \int_0^t e^{i\ipax(\tau-t)}\G_j(v(\tau))\,d\tau.
\end{align*}
Taking the $H^4$ norm, we obtain
\begin{align}\label{vjH4}
 \left\|v_j(t)\right\|_{H^4}
 &\le
 \left\|v_j^\circ\right\|_{H^4}+\int_0^t 
 \left\|\G_j(v(\tau))\right\|_{H^4}\,d\tau
 \nonumber\\
 &\le
 C\eps + 
 C\int_0^t 
 \left\| F_j\left(v(\tau)+\conj v(\tau)\right)\right\|_{H^3} \,d\tau\nonumber\\
 &\le
 C\eps + C\int_0^t 
 \left\|v(\tau)\right\|_{H_\infty^1}^2
 \left\|v(\tau)\right\|_{H^3}\,d\tau
 \nonumber\\
 &\le
 C\eps + C\left\|v\right\|_{X_T}^3\int_0^t \jb{\tau}^{-1+\gamma}\,d\tau
 \le
 C\eps\jb{t}^\gamma,
\end{align}
where we used the assumption $\left\|v\right\|_{X_T}\le\sqrt{\eps}$.
Next, we use the commutation relations
\begin{align*}
\L\P = (\P-i\ipax^{-1}\pa_x)\L,\qquad
[\P,\ipax^{-1}]=-\ipax^{-3}\pa_x\pa_t
\end{align*}
to get
\begin{align}\label{LPvj1}
 \L\P v_j
 &=
 \P\L v_j - i\ipax^{-1}\pa_x \L v_j
 =
 \P\G_j(v) - i\ipax^{-1}\pa_x \G_j(v)\nonumber\\
 &=
 \frac{i}{2}\P\ipax^{-1} F_j(v+\conj v)
 +\frac{1}{2}\ipax^{-2}\pa_x F_j(v+\conj v)\nonumber\\
 &=
 \frac{i}{2}\ipax^{-1}\P F_j(v+\conj v)
 -\frac{i}{2}\ipax^{-3}\pa_x\pa_t F_j(v+\conj v)
 +\frac{1}{2}\ipax^{-2}\pa_x F_j(v+\conj v).
\end{align}
Since $\pa_t v = 4i\ipax^{-1}F (\re v)-i\ipax v$
by \eqref{nlkg-var},
the second term of \eqref{LPvj1} can be written as
\begin{align}
 &-\frac{i}{2}\ipax^{-3}\pa_x\pa_t F_j(v+\conj v)
 \nonumber\\
 &\qquad=
 -\frac{i}{2}\ipax^{-3}\pa_x\left(
 \nabla_{v+\conj v} F_j(v+\conj v) \cdot 2\re (\pa_t v)\right)
 \nonumber\\
 &\qquad=
 -i\ipax^{-3}\pa_x\bigg[
 \nabla_{v+\conj v} F_j(v+\conj v) \cdot 
 \re\left(
 4i\ipax^{-1}F (\re v)-i\ipax v\right)
 \bigg]
 \label{LPvj2}
\end{align}
where $\re Z$ stands for $(\re Z_n)_{1\le n\le N}$ for $Z\in\C^N$.
Combining \eqref{LPvj1} and \eqref{LPvj2}, we have
\begin{align}\label{LPvj3}
 \L\P v_j
 &=
 \frac{i}{2}\ipax^{-1}\P F_j(v+\conj v)
 +\frac{1}{2}\ipax^{-2}\pa_x F_j(v+\conj v)\nonumber\\
 &\qquad
 -i\ipax^{-3}\pa_x\bigg[
 \nabla_{v+\conj v} F_j(v+\conj v) \cdot 
 \re\left(
 4i\ipax^{-1}F (\re v)-i\ipax v\right)
 \bigg]\nonumber\\
 &=:
 (\L\P v_j)^{\rm I} + (\L\P v_j)^{\rm II} + (\L\P v_j)^{\rm III}.
\end{align}
Now we estimate $\L \P v_j$ in the $H^2$ norm.
By direct calculations, we get
\begin{align*}
 &\left\|(\L\P v_j)^{\rm I}\right\|_{H^2}
 \le
 C\left\|\P F_j(v+\conj v)\right\|_{H^1}
 \le
 C\left\|v\right\|_{L^\infty} \left\|v\right\|_{H_\infty^1} 
 \left\|\P v\right\|_{H^1}
 \le
 C\left\|v\right\|_{H_\infty^1}^2 \left\|\P v\right\|_{H^2},\\
 &\left\|(\L\P v_j)^{\rm II}\right\|_{H^2}
 \le
 C\left\|F_j(v+\conj v)\right\|_{H^1}
 \le
 C\left\|v\right\|_{L^\infty}^2 \left\|v\right\|_{H^1}
\end{align*}
and
\begin{align*}
 &\left\|(\L\P v_j)^{\rm III}\right\|_{H^2}\\
 &\qquad\le
 \left\|
 \nabla_{v+\conj v} F_j(v+\conj v)\cdot
 \re\left(4i\ipax^{-1} F(\re v)\right)\right\|_{L^2}
 +
 \left\|
 \nabla_{v+\conj v} F_j(v+\conj v)\cdot
 \re\left(i\ipax v\right)\right\|_{L^2}\\
 &\qquad\le
 C\left\|v\right\|_{L^\infty}^4 \left\|v\right\|_{L^2} + C\left\|v\right\|_{L^\infty}^2 \left\|v\right\|_{H^1},
\end{align*}
which yields
\begin{align}\label{LpvjH2}
 \left\|\L\P v_j\right\|_{H^2}
 &\le
 C\left\|v\right\|_{H_\infty^1}^2 \left\|\P v\right\|_{H^2}
 +C\left\|v\right\|_{H_\infty^1}^2 \left\|v\right\|_{H^4}
 +C\left\|v\right\|_{H_\infty^1}^4 \left\|v\right\|_{H^4}\nonumber\\
 &\le
 C\eps \jb{t}^{-1} \|\P v\|_{H^2} + C\eps^2 \jb{t}^{-1+\gamma},
\end{align}
where we used \eqref{vjH4} and the assumption $\left\|v\right\|_{X_T}\le \sqrt\eps$.
Again taking $\expp$ to the both sides of \eqref{LPvj3}, 
using the identity $\expp \L = \pa_t \expp$,
and integrating with respect to time,
we obtain
\begin{align}\label{Pvint}
 \P v_j(t)
 =
 \expm \P v_j(0) + \int_0^t e^{i\ipax(\tau-t)}\L\P v_j(\tau)\,d\tau.
\end{align}
And then from \eqref{LpvjH2} we have
\begin{align*}
 \left\|\P v_j(t)\right\|_{H^2}
 &\le
 \left\|\P v_j(0) \right\|_{H^2} + \int_0^t \left\|
 \L \P v_j (\tau)
 \right\|_{H^2}\,d\tau\\
 &\le
 C\eps + C\eps^2 \int_0^t \jb{\tau}^{-1+\gamma}\,d\tau
 +C\eps \int_0^t \jb{\tau}^{-1} \left\|\P v(\tau)\right\|_{H^2}\,d\tau\\
 &\le
 C\eps + C\eps\jb{t}^\gamma
 +C\eps \int_0^t \jb{\tau}^{-1} \left\|\P v(\tau)\right\|_{H^2}\,d\tau.
\end{align*}
Thus the Gronwall lemma yields
\begin{align}\label{PvH2}
 \left\|\P v(t)\right\|_{H^2}
 \le
 C\eps \jb{t}^\gamma (1+T)^{C\eps}
 \le
 C\eps \jb{t}^\gamma.
\end{align}
By the same way,
we can write the integral equation associated with the identity
$\L x = x \L - i\ipax^{-1}\pa_x$, that is,
\begin{align*}
 xv_j(t)
 =
 \expm x v_j^\circ
 +
 \int_0^t e^{i\ipax (\tau-t)} \left( x\L - i\ipax^{-1} \pa_x\right) v_j(\tau)\,d\tau.
\end{align*}
Noting that $[x, \ipax^{-1}]=-\ipax^{-3}\pa_x$, we have
\begin{align}\label{xGj}
 \left\|x\G_j (v)\right\|_{H^2}
 &\le
 C\left\|x F_j(v+\conj v)\right\|_{H^1}
 +C\left\|F_j(v+\conj v)\right\|_{L^2}\nonumber\\
 &\le
 C\left\|v\right\|_{L^\infty}^2 \left\|xv\right\|_{H^1}
 \le
 C\eps \jb{t}^{-1} \left\| xv\right\|_{H^1}.
\end{align}
So we obtain
\begin{align*}
 \left\|xv_j(t)\right\|_{H^2}
 &\le
 \left\|v_j^\circ\right\|_{H^{2,1}}
 +\int_0^t \left\| x\G_j(v(\tau))\right\|_{H^2} + 
 \left\|v_j(\tau)\right\|_{H^2}\,d\tau\\
 &\le
 C\eps + C\eps \int_0^t \jb{\tau}^\gamma\,d\tau
 +C\eps\int_0^t \jb{\tau}^{-1} \left\| x v(\tau)\right\|_{H^2}\,d\tau\\
 &\le
 C\eps \jb{t}^{\gamma+1} +
 C\eps \int_0^t \jb{\tau}^{-1} \left\| x v(\tau)\right\|_{H^2}\,d\tau.
\end{align*}
And the Gronwall lemma yields
\begin{align}\label{xvH2}
 \left\|x v(t)\right\|_{H^2}
 \le
 C\eps\jb{t}^{\gamma+1}.
\end{align}
Since $\J=i\P -ix\L - \ipax^{-1}\pa_x$, 
by \eqref{vjH4}, \eqref{PvH2}, \eqref{xGj} and \eqref{xvH2}, we get
\begin{align}\label{JvjH2}
 \left\|\J v\right\|_{H^2}
 &\le
 \left\|\P v\right\|_{H^2} + 
 \left\|x\L v\right\|_{H^2} + 
 \left\| \ipax^{-1}\pa_x v\right\|_{H^2}
 \nonumber\\
 &\le
 \left\|\P v\right\|_{H^2} + 
 \left\|x\G(v)\right\|_{H^2} 
 + \left\|v\right\|_{H^2}
 \le
 C\eps\jb{t}^\gamma.
\end{align}
From now on, we are going to evaluate $\J v$ in the $H^3$ norm.
As before, we first estimate $\left\| \P v\right\|_{H^3}$.
Noting that
\begin{align}\label{xGjH3}
 \left\|x\G_j (v)\right\|_{H^3}
 \le
 C\left\|x F_j(v+\conj v)\right\|_{H^2}
 +C\left\|F_j(v+\conj v)\right\|_{H^1}
 \le
 C\left\|v\right\|_{H_\infty^1}^2 \left\|xv\right\|_{H^2}
 \le
 C\eps^2 \jb{t}^\gamma,
\end{align}
by Lemma~\ref{lem2}, 
the relation $[\J, \ipax^2] = 2\ipax \pa_x$,
the identity $\J=i\P -ix\L - \ipax^{-1}\pa_x$
and \eqref{vjH4}, \eqref{xGjH3},
we have
\begin{align}\label{vjHi2}
 \left\|v_j\right\|_{H_\infty^2}
 &\le
 C\jb{t}^{-1/2} \left\|v_j\right\|_{H^4}^{1/2}
 \left(\|v_j\|_{H^4}^{1/2} + \left\|\J v_j\right\|_{H^3}^{1/2}\right)
 \nonumber\\
 &\le
 C\jb{t}^{-1/2} \left\|v_j\right\|_{H^4}^{1/2}
 \left(\left\|v_j\right\|_{H^4}^{1/2} + \left\|\P v_j\right\|_{H^3}^{1/2}
 +\left\|x \G_j(v)\right\|_{H^3}^{1/2}\right)
 \nonumber\\
 &\le
 C\eps \jb{t}^{\gamma-1/2}
 +C\eps^{1/2} \jb{t}^{-1/2+\gamma/2} \left\|\P v_j\right\|_{H^3}^{1/2}.
\end{align}
On the other hand, from \eqref{LPvj3} we evaluate
\begin{align*}
 \left\|\L \P v_j\right\|_{H^3}
 \le
 C\left\|v\right\|_{H_\infty^1}^2 \|\P v\|_{H^2}
 +C\left\|v\right\|_{H_\infty^1}\left\|v\right\|_{H_\infty^2}\|\P v\|_{H^1}
 +C\left\|v\right\|_{H_\infty^1}^2\left\|v\right\|_{H^2}
 +C\left\|v\right\|_{H_\infty^1}^4\left\|v\right\|_{H^1}
\end{align*}
because we have
\begin{align*}
 &\left\|(\L\P v_j)^{\rm I}\right\|_{H^3}
 \le
 C\left\|\P F_j(v+\conj v)\right\|_{H^2}
 \le
 C\left\|v\right\|_{L^\infty}^2 \left\|\P v\right\|_{H^2}
 +C\left\|v\right\|_{H_\infty^1} \left\|v\right\|_{H_\infty^2} 
 \left\|\P v\right\|_{H^1},\\
 &\left\|(\L\P v_j)^{\rm II}\right\|_{H^3}
 \le
 C\left\| F_j (v+\conj v)\right\|_{H^2}
 \le
 C\left\|v\right\|_{H_\infty^1}^2 \left\|v\right\|_{H^2}
\end{align*}
and
\begin{align*}
 &\left\|(\L\P v_j)^{\rm III}\right\|_{H^3}\\
 &\qquad\le
 \left\|
 \nabla_{v+\conj v} F_j(v+\conj v)\cdot
 \re\left(4i\ipax^{-1} F(\re v)\right)\right\|_{H^1}
 +
 \left\|
 \nabla_{v+\conj v} F_j(v+\conj v)\cdot
 \re\left(i\ipax v\right)\right\|_{H^1}\\
 &\qquad\le
 C\left\|v\right\|_{L^\infty}^4 \left\|v\right\|_{H^1} + C\left\|v\right\|_{H_\infty^1}^2 \left\|v\right\|_{H^2}.
\end{align*}
Thus by \eqref{vjHi2}, \eqref{vjH4}, \eqref{PvH2} 
and $\left\|v\right\|_{X_T}\le\sqrt\eps$, 
we evaluate that
\begin{align*}
 \left\|\L\P v_j\right\|_{H^3}
 &\le
 C\eps \jb{t}^{-1} \left\|\P v\right\|_{H^2}
 +C\eps^2 \jb{t}^{-1+\gamma}
 +C\eps^{3/2}\jb{t}^{-1/2+\gamma}\left\|v\right\|_{H_\infty^2}
 \nonumber\\
 &\le
 C\eps \jb{t}^{-1} \left\|\P v\right\|_{H^3}
 +C\eps^2 \jb{t}^{-1+2\gamma}
 +C\eps^2 \jb{t}^{-1+3\gamma/2}\|\P v\|_{H^3}^{1/2}
 \nonumber\\
 &\le
 C\eps \jb{t}^{-1} \|\P v\|_{H^3}
 +C\eps^2 \jb{t}^{-1+2\gamma}
 +C\eps^2\left(\jb{t}^{-1}
 \left\|\P v\right\|_{H^3} + \jb{t}^{-1+3\gamma}\right)
 \nonumber\\
 &\le
 C\eps \jb{t}^{-1} \left\|\P v\right\|_{H^3}
 +C\eps^2 \jb{t}^{-1+3\gamma}.
\end{align*}
Taking the $H^3$ norm to \eqref{Pvint}, we obtain
\begin{align*}
 \left\|\P v_j(t)\right\|_{H^3}
 &\le
 \left\|\P v_j(0)\right\|_{H^3}+\int_0^t 
 \left\|\L\P v_j(\tau)\right\|_{H^3}\,d\tau\\
 &\le
 C\eps + C\eps^2 \int_0^t \jb{\tau}^{-1+3\gamma}\,d\tau
 +C\eps \int_0^t \jb{\tau}^{-1}\left\|\P v(\tau)\right\|_{H^3}\,d\tau\\
 &\le 
 C\eps\jb{t}^{3\gamma}
 +C\eps \int_0^t \jb{\tau}^{-1}\left\|\P v(\tau)\right\|_{H^3}\,d\tau.
\end{align*}
Therefore the Gronwall lemma yields
\begin{align}\label{PvH3}
 \left\|\P v(t)\right\|_{H^3}
 \le
 C\eps\jb{t}^{3\gamma}.
\end{align}
By the identity $\J=i\P - ix\L - \ipax^{-1}\pa_x$, 
\eqref{vjH4}, \eqref{xGjH3} and \eqref{PvH3},
we see that
\begin{align}\label{JvH3}
 \left\|\J v(t)\right\|_{H^3}
 \le
 C\eps \jb{t}^{3\gamma}.
\end{align}
Now we are in a position to estimate $\left\| v\right\|_{H_\infty^1}$.
To do this, we will derive an $L^\infty$ estimate for
the new variable $\psi(t) = \jb{\xi}^\kappa\F \expp v(t)$
where $\frac{3}{2}\le\kappa\le 4$,
and then prove $\left\|v(t)\right\|_{H_\infty^1}\le C\eps\jb{t}^{-1/2}$
by the decomposition of the free Klein-Gordon evolution group.
We note that in the case of $t\le 1$,
the standard Sobolev embedding and \eqref{vjH4} lead to
\begin{align*}
 \jb{t}^{1/2+\gamma}\left\|v(t)\right\|_{H_\infty^1}
 \le
 2^{1/4+\gamma/2}\left\|\ipax v(t)\right\|_{L^\infty}
 \le
 C\left\|\ipax v(t)\right\|_{H^1}
 \le
 C\eps\jb{t}^\gamma,
\end{align*}
so that
\begin{align}\label{smallt}
 \left\|v(t)\right\|_{H_\infty^1}\le C\eps\jb{t}^{-1/2}
\end{align}
holds for $t\le 1$. From now on, we focus on the case of $t\ge 1$.
We recall Lemma~\ref{lem4} for $\psi(t)$:
\begin{align}\label{psiode1}
 \pa_t \psi (t) 
 =
 \frac{1}{2} it^{-1}\jb{\xi}^{2-2\kappa}\widetilde{F}(\psi(t))+S(t)+R(t),
 \qquad t\ge1,
\end{align}
where the non-resonant term $S(t)$ takes the form of
\begin{align*}
 S_j(t)
 &=
  it^{-1}
 \sum_{k,l,m=1}^N\sum_{\nu=1,4,6,7,8}
 C_{j,k,l,m}^\nu
 a_{\nu,\kappa}(\xi)
 e^{-itb_\nu(\xi)}
  \D_{\omega_\nu}
 \prod_{\lambda=k,l,m}\left(
 \conj{\psi_\lambda}^{1-\alpha_{\lambda_\nu}}
 \psi_\lambda^{\alpha_{\lambda_\nu}}\right)
\end{align*}
with some constants $C_{j,k,l,m}^\nu\in\C$,
$\omega_\nu$, $\alpha_{\lambda_\nu}$ given in Lemma~\ref{lem3}
and
\begin{align*}
 a_{\nu,\kappa}(\xi)
 &=
 \jb{\xi}^{\kappa-1}
 \langle\xi\omega_\nu^{-1}\rangle^{3-3\kappa},\\
 b_\nu(\xi)
 &=
 \omega_\nu \langle\xi \omega_\nu^{-1}\rangle-\jb{\xi}.
\end{align*}
Here the remainder $R(t)$ satisfies the estimate
\begin{align}\label{Remainder}
 \left\|R_j(t)\right\|_{L^\infty}
 &\le
 Ct^{-5/4}\left\|\psi(t)\right\|_{H^{1,4-\kappa}}^3
 \nonumber\\
 &\le
 Ct^{-5/4}\left\|\jb{\xi}\F \expp v(t)\right\|_{H^{1,3}}^3
 \nonumber\\
 &\le
 C\eps t^{-5/4 + 9\gamma},
\end{align}
where we used along the definition of $\J$,
\begin{align}\label{xifexpv}
 \left\|\jb{\xi}\F\expp v(t)\right\|_{H^{1,3}}
 &\le
 C\left\|\jb{\xi}^3 \jb{\xi}\pa_\xi \F\expp v(t)\right\|_{L^2}
 +C\left\|\jb{\xi}^4\F\expp v(t)\right\|_{L^2}\nonumber\\
 &\le
 C\left\|\jb{\xi}^3 \jb{\xi}\expmxi i\pa_\xi \exppxi\F v(t)\right\|_{L^2}
 +C\left\|v(t)\right\|_{H^4}\nonumber\\
 &\le
 C\left\|\J v(t)\right\|_{H^3}
 +C\left\| v(t)\right\|_{H^4}\nonumber\\
 &\le
 C\eps\jb{t}^{3\gamma}
\end{align}
for the last inequality.
Here we note that
\begin{align}\label{matA}
 |Y\cdot AZ|^2 \le (Y\cdot AY)(Z\cdot AZ),
 \qquad
 c_* |Y|^2 \le Y\cdot AY \le c^* |Y|^2
\end{align}
for any $Y,Z\in\C^N$, where the matrix $A$ is in Theorem~\ref{thm2}
and $c^*$ (resp. $c_*$) is the largest (resp. smallest) eigenvalue of $A$.
Then it follows from
\eqref{psiode1}, \eqref{struc-condi}, \eqref{matA}
and \eqref{Remainder} that
\begin{align*}
 &\pa_t \left(\psi(t) \cdot A\psi(t)\right)
 =
 2\re\left(\pa_t\psi(t) \cdot A\psi(t)\right)\\
 &\qquad=
 -t^{-1}\jb{\xi}^{2-2\kappa}
 \im\left(\widetilde{F}(\psi(t))\cdot A\psi(t)\right)
 +2\re\left(S(t)\cdot A\psi(t)\right)
 +2\re\left(R(t)\cdot A\psi(t)\right)\\
 &\qquad\le
 2\re\left(S(t)\cdot A\psi(t)\right)
 +2t^{-5/4}\left|t^{5/4}R(t)\cdot A\psi(t)\right|\\
 &\qquad\le
 2\re\left(S(t)\cdot A\psi(t)\right)
 +t^{-5/4}\left(\psi(t)\cdot A\psi(t)
 +t^{5/2} R(t)\cdot AR(t)\right)\\
 &\qquad\le
 2\re\left(S(t)\cdot A\psi(t)\right)
 +t^{-5/4}\left(\psi(t)\cdot A\psi(t)\right)
 +C\eps^2 t^{-5/4+18\gamma}.
\end{align*}
Noting that (as in the proof of \eqref{xifexpv})
\begin{align*}
 \left\|\psi(1)\right\|_{L^\infty}
 \le
 C\left\|\jb{\xi}^\kappa \F \expp v(1)\right\|_{H^1}
 \le
 C\left\|\jb{\xi} \F \expp v(1)\right\|_{H^{1,\kappa-1}}
 \le
 C\eps
\end{align*}
holds, we obtain
\begin{align*}
 \psi(t)\cdot A\psi(t)
 &\le
 C\eps^2
 +2\left|\int_1^t \re\left(S(\tau)\cdot A\psi(\tau)\right)\,d\tau \right|
 +\int_1^t \tau^{-5/4} \left(\psi(\tau)\cdot A\psi(\tau)\right)\,d\tau
 \nonumber\\
 &\le
 C\eps^2
 +\int_1^t \tau^{-5/4} \left(\psi(\tau)\cdot A\psi(\tau)\right)\,d\tau
\end{align*}
for $t\ge1$, provided that
\begin{align}\label{intres}
 \sup_{t\in[1,T]}
 \left|\int_1^t \re\left(S(\tau)\cdot A\psi(\tau)\right) \,d\tau\right|
 \le
 C\eps^2.
\end{align}
Once we get \eqref{intres}, we can apply the Gronwall lemma
and \eqref{matA} to obtain
\begin{align}\label{psiLinfty}
 \sup_{t\in[1,T]}\left\|\psi(t)\right\|_{L^\infty}
 \le
 C\eps.
\end{align}
In order to establish \eqref{intres}, we observe that
\begin{align*}
 &\int_1^t 
 \frac{a_{\nu,\kappa}(\xi)e^{-i\tau b_\nu(\xi)}}{\tau}
 \psi_{j_1}^{(\sigma_1)}(\tau,\xi)
 \prod_{n=2}^4 \left(\psi_{j_n}^{(\sigma_n)}
 (\tau,\xi\omega_\nu^{-1})\right)\,d\tau\\
 &\qquad=
 \int_1^t \left(\pa_\tau \Psi_1\right)(\tau,\xi) + \Psi_2 (\tau,\xi)\, d\tau
 =
 \Psi_1(t,\xi) - \Psi_1(1,\xi) + \int_1^t \Psi_2(\tau,\xi)\,d\tau,
\end{align*}
for $\nu\in\{1,4,6,7,8\}$ (i.e. $\omega_\nu\in\{-1,\pm3\}$),
$j_1, \ldots, j_4\in\{1,\ldots,N\}$
and
$\sigma_1, \ldots, \sigma_4 \in \{+,-\}$,
where
$\psi_{j_n}^{(+)} = \psi_{j_n}$,
$\psi_{j_n}^{(-)} = \conj{\psi_{j_n}}$
and
\begin{align*}
 &\Psi_1(\tau,\xi)
 =
 \frac{a_{\nu,\kappa}(\xi) e^{-i\tau b_\nu (\xi)}}{
 -i\tau b_\nu(\xi)}
 \psi_{j_1}^{(\sigma_1)}(\tau,\xi)
 \prod_{n=2}^4 \left(\psi_{j_n}^{(\sigma_n)} 
 (\tau, \xi\omega_\nu^{-1})\right),\\
 &\Psi_2(\tau,\xi)
 =
 \frac{a_{\nu,\kappa}(\xi) e^{-i\tau b_\nu (\xi)}}{
 -i\tau^2 b_\nu(\xi)}
 \psi_{j_1}^{(\sigma_1)}(\tau,\xi)
 \prod_{n=2}^4 \left(\psi_{j_n}^{(\sigma_n)} 
 (\tau, \xi\omega_\nu^{-1})\right)\\
 &\qquad\qquad\qquad+
 \frac{a_{\nu,\kappa}(\xi) e^{-i\tau b_\nu (\xi)}}{
 i\tau b_\nu(\xi)}
 \Bigg[
 \big(\pa_\tau \psi_{j_1}^{(\sigma_1)}\big)(\tau,\xi)
 \prod_{n=2}^4 \left(\psi_{j_n}^{(\sigma_n)} 
 (\tau, \xi\omega_\nu^{-1})\right)\\
 &\qquad\qquad\qquad+
 \psi_{j_1}^{(\sigma_1)}(\tau,\xi)\sum_{n=2}^4
 \bigg(
 \big(\pa_\tau \psi_{j_n}^{(\sigma_n)}\big)(\tau,\xi\omega_\nu^{-1})
 \prod_{\varsigma\in\{2,3,4\}\setminus\{n\}}
 \psi_{j_\varsigma}^{(\sigma_\varsigma)}(\tau,\xi\omega_\nu^{-1})\bigg)
 \Bigg].
\end{align*}
Using \eqref{psiode1}, \eqref{Remainder}
and
\begin{align*}
 \left\|\psi(\tau)\right\|_{L^\infty}
 \le
 C\left\|\jb{\xi}^\kappa \F \expp v(\tau)\right\|_{H^1}
 \le
 C\left\|\jb{\xi} \F \expp v(\tau)\right\|_{H^{1,\kappa-1}}
 \le
 C\eps \jb{\tau}^{3\gamma},
\end{align*}
we have
\begin{align*}
 \left\|\pa_\tau \psi (\tau)\right\|_{L^\infty}
 &\le
 C\tau^{-1}
 \left(\jb{\xi}^{2-2\kappa}
 +\jb{\xi}^{\kappa-1}
 \langle\xi\omega_\nu^{-1}\rangle^{3-3\kappa}
 \right)\left\|\psi(\tau)\right\|_{L^\infty}^3
 +C\eps\tau^{-5/4+9\gamma}\\
 &\le
 C\eps^3\tau^{-1}\jb{\tau}^{9\gamma}
 +C\eps\tau^{-5/4+9\gamma}
 \le
 C\eps\tau^{-1+9\gamma}
\end{align*}
since $\frac{3}{2}\le \kappa\le 4$.
Observing that when $\kappa\ge\frac{3}{2}$,
\begin{align*}
 \left|\frac{a_{\nu,\kappa}(\xi)}{b_\nu(\xi)}\right|
 =
 \left|\frac{\jb{\xi}^{\kappa-1}\jb{\xi\omega_\nu^{-1}}^{3-3\kappa}}
 {\omega_\nu\jb{\xi\omega_\nu^{-1}}-\jb{\xi}}\right|
 \le
 \frac{\jb{\xi}^{\kappa-1}\jb{\xi\omega_\nu^{-1}}^{3-3\kappa}}
 {\jb{\xi}^{-1}}
 \le C
\end{align*}
for all $\xi\in\R$ and $\omega_\nu\in\{-1,\pm 3\}$, we get
\begin{align*}
 &|\Psi_1(\tau,\xi)|
 \le
 C\tau^{-1}
 \left\|\psi(\tau)\right\|_{L^\infty}^4
 \le
 C\eps^4 \tau^{-1+12\gamma}
\end{align*}
and
\begin{align*}
 |\Psi_2(\tau,\xi)|
 &\le
 C\tau^{-2}\left\|\psi(\tau)\right\|_{L^\infty}^4
 +C\tau^{-1}\left\|\pa_\tau \psi(\tau)\right\|_{L^\infty}
 \left\|\psi(\tau)\right\|_{L^\infty}^3\\
 &\le
 C\eps^4 \tau^{-2+12\gamma}
 +C\eps^4 \tau^{-2+18\gamma}
 \le
 C\eps^4 \tau^{-2+18\gamma}.
\end{align*}
From them we deduce that
\begin{align*}
 &\sup_{t\in[1,T]}
 \left|\int_1^t \re\left(S(\tau)\cdot A\psi(\tau)\right) \,d\tau\right|\\
 &\qquad\le
 \sum_{\substack{\nu\in\{1,4,6,7,8\}\\
 j_1,\ldots,j_4\in\{1,\ldots,N\}\\
 \sigma_1,\ldots,\sigma_4\in\{+,-\}}}
 C\sup_{t\in[1,T]}
 \left|
 \int_1^t 
 \frac{a_{\nu,\kappa}(\xi) e^{-i\tau b_\nu(\xi)}}{\tau}
 \psi_{j_1}^{(\sigma_1)}(\tau,\xi)
 \prod_{n=2}^4 \psi_{j_n}^{(\sigma_n)}
 (\tau,\xi\omega_\nu^{-1})\,d\tau
 \right|\\
 &\qquad\le
 C\eps^4 \left(1+\int_1^\infty \tau^{-2+18\gamma}\,d\tau\right).
\end{align*}
This proves \eqref{intres} so that \eqref{psiLinfty} follows immediately.
Now we are ready to prove $\left\|v(t)\right\|_{H_\infty^1}\le C\eps\jb{t}^{-1/2}$
for $t\ge 1$.
We let $\psi^1 (t) = \jb{\xi}\F\expp v(t)$ as before.
By the decomposition of the free Klein-Gordon evolution group
\eqref{decompKG}
and the relation $v(t)=\ipax^{-1}\expm\IF\psi^1(t)$,
we have
\begin{align*}
 &\ipax v(t)
 =
 \expm \IF \psi^1(t)\\
 &\quad=
 \left(\D_t \M(t) \B +\D_t \M(t) \B(\V(t)-1) + \D_t \W(t)\right)\psi^1(t)\\
 &\quad=
 \D_t \M(t)\B\jb{\xi}^{-3/2}\jb{\xi}^{3/2}\psi^1(t) +
 \D_t \M(t)\B\jb{\xi}^{-3/2}\jb{\xi}^{3/2}(\V(t)-1)\psi^1(t)
 +\D_t\W(t)\psi^1(t).
\end{align*}
Since \eqref{psiLinfty} implies 
$\|\jb{\xi}^{3/2}\psi^1 (t)\|_{L^\infty}\le C\eps$ when $\kappa=\frac{5}{2}$,
from the second and the third estimates in Lemma~\ref{lem1},
we finally obtain
\begin{align*}
 \left\|v(t)\right\|_{H_\infty^1}
 &\le
 Ct^{-1/2}\bigg(\left\|\jb{\xi}^{3/2}\psi^1(t)\right\|_{L^\infty}
 +\left\|\jb{\xi}^{3/2} (\V(t)-1) \psi^1(t)\right\|_{L^\infty}
 +\left\|\W(t)\psi^1(t)\right\|_{L^\infty}\bigg)\\
 &\le
 C\eps t^{-1/2}
 +Ct^{-1/2-1/4} \left\|\psi^1(t)\right\|_{H^{1,3}}
 +Ct^{-1/2-1/2} \left\|\psi^1(t)\right\|_{H^{1,3}}\\
 &\le
 C\eps t^{-1/2} + Ct^{-3/4} \left\|\jb{\xi}\F\expp v(t)\right\|_{H^{1,3}}\\
 &\le
 C\eps t^{-1/2} + C\eps t^{-3/4+3\gamma}
\end{align*}
for $t\ge 1$, 
where we used \eqref{xifexpv} for the last inequality.
At last together with \eqref{smallt} we reach
\begin{align}\label{vHinfty1}
 \left\|v(t)\right\|_{H_\infty^1}
 \le
 C\eps\jb{t}^{-1/2}
\end{align}
for all $t\in[0,T]$.
Therefore it follows from
\eqref{vjH4}, \eqref{JvjH2}, \eqref{JvH3} and \eqref{vHinfty1} that
\begin{align*}
 \left\| v\right\|_{X_T} \le C\eps <\sqrt\eps
\end{align*}
for sufficiently small $\eps$,
which implies the desired contradiction.
Thus there exists a unique global solution
$v\in C^0([0,\infty);H^{4,1})$ of the initial value problem \eqref{nlkg-var}
with the time-decay estimate \eqref{vdecay}
under the condition \eqref{struc-condi}.
The proof of Theorem~\ref{thm2} is completed.\qed

\end{document}